\newtheorem{theorem}{Theorem}[section]
\newtheorem{lemma}[theorem]{Lemma}
\newtheorem{proposition}[theorem]{Proposition}
\theoremstyle{definition}
\newtheorem{definition}[theorem]{Definition}
\newtheorem{remark}[theorem]{Remark}
\numberwithin{equation}{section}
\newcommand{\Lip}{\operatorname{Lip}}
\newcommand{\dom}{\operatorname{dom}}
\newcommand{\ind}{\operatorname{ind}}
\newcommand{\im}{\operatorname{im}}
\newcommand{\inti}{\operatorname{interior}}
\newcommand{\graph}{\operatorname{gr}}
\newcommand{\tn}{\mathbb{P}}
\newcommand{\calD}{\mathcal{D}}
\newcommand{\dist}{\operatorname{dist}}
\newcommand{\Hd}{\dim_{\mathrm{H}}}
\newcommand{\abs}[1]							
	{\left| #1 \right|}
\renewcommand{\d}{\, \mathrm{d}}
\renewcommand{\P}{\mathbb P}
\newcommand{\M}{\mathcal M}
\def\XXint#1#2#3{{\setbox0=\hbox{$#1{#2#3}{\int}$ }
		\vcenter{\hbox{$#2#3$ }}\kern-.6\wd0}}
\newcommand{\N}{\mathbb N}
\newcommand{\R}{\mathbb R}
\renewcommand{\H}{\mathcal H}
\renewcommand{\L}{\mathcal L}
\renewcommand{\S}{\mathbb S^{n-1}}
\begin{document}

\date{}
\author{David Bate}
\email{david.bate@warwick.ac.uk}
\address{Zeeman Building, University of Warwick, CV4 7AL, UK}
\author{Ilmari Kangasniemi}
\email{kikangas@syr.edu}
\address{215 Carnegie Building, Syracuse University, NY 13244, USA}
\author{Tuomas Orponen}
\email{tuomas.t.orponen@jyu.fi}
\address{P.O. Box 35 (MaD), FI-40014, University of Jyväskylä, Finland}

\thanks{D.B. is supported by the Academy of Finland via the project \emph{Projections, densities and rectifiability: new settings for classical ideas}, grant No. 308510. IK is supported by the doctoral program DOMAST of the University of Helsinki. T.O. is supported by the Academy of Finland via the project \emph{Quantitative rectifiability in Euclidean and non-Euclidean spaces}, grant No. 309365.”}

\begin{abstract}
Suppose that $(X,d,\mu)$ is a metric measure space of finite Hausdorff dimension and that, for every Lipschitz $f \colon X \to \R$, $\Lip(f,\cdot)$ is dominated by every upper gradient of $f$. We show that $X$ is a Lipschitz differentiability space, and the differentiable structure of $X$ has dimension at most $\Hd X$. Since our assumptions are satisfied whenever $X$ is doubling and satisfies a Poincar\'e inequality, we thus obtain a new proof of Cheeger's generalisation of Rademacher's theorem. 

Our approach uses Guth's multilinear Kakeya inequality for neighbourhoods of Lipschitz graphs to show that any non-trivial measure with $n$ independent Alberti representations has Hausdorff dimension at least $n$.
\end{abstract}

\title[Cheeger's theorem via the multilinear Kakeya inequality]{Cheeger's differentiation theorem via the multilinear Kakeya inequality}

\keywords{Alberti representations, differentiation of Lipschitz functions, multilinear Kakeya inequality}
\subjclass[2010]{30L99 (Primary), 28A50, 49Q15 (Secondary)}
\maketitle

\section{Introduction}
Throughout this article, a \emph{metric measure space} $(X,d,\mu)$ will refer to a complete and separable metric space $(X,d)$ equipped with a $\sigma$-finite Borel measure $\mu$.

Rademacher's theorem states that any Lipschitz $f\colon \R^n\to \R$ is differentiable almost everywhere. Cheeger \cite{cheeger-diff} gave a far reaching generalisation of Rademacher's theorem to any \emph{PI-space}, that is, a doubling metric measure space satisfying a Poincar\'e inequality.
One of the novelties of Cheeger's work is the introduction of a notion of a derivative in the setting of metric spaces:
\begin{definition}
Fix $x_{0}\in X$ and, for $n\geq 0$ an integer, fix a Lipschitz $\phi\colon X \to \R^n$.
A function $f\colon X \to \R$ is \emph{differentiable with respect to $\phi$} at $x_0$ if there exists a unique, linear $Df(x_0) \colon\R^n\to \R$ such that
	\[ \lim_{x\to x_0} \frac{|f(x)-f(x_0)-Df(x_0)(\phi(x)-\phi(x_0))|}{d(x,x_0)}=0.\]
\end{definition}

With a notion of a derivative established, it is easy to define what it means for a metric measure space $(X,d,\mu)$ to satisfy a generalisation of Rademacher's theorem:
\begin{definition}\label{lipDiffSpace}
A metric measure space $(X,d,\mu)$ is a \emph{Lipschitz differentiability space} if there exist a countable Borel decomposition $X=\cup_i U_i$ and countably many Lipschitz $\phi_i\colon X \to \R^{n_i}$, $i \in \N$, such that,
for every Lipschitz $f\colon X \to \R$ and every $i\in\N$, $f$ is differentiable at $\mu$-a.e.\ point in $U_i$ with respect to $\phi_i$.
\end{definition}
\begin{theorem}[Cheeger]\label{cheeger}
 Every PI space is a Lipschitz differentiability space.
\end{theorem}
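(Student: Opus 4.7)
The plan is to deduce Cheeger's theorem from the paper's main result, which states that every finite-Hausdorff-dimensional metric measure space in which $\Lip(f,\cdot)$ is dominated by every upper gradient of $f$ is a Lipschitz differentiability space of dimension at most $\Hd X$. The first step is therefore to verify the two hypotheses on a PI space $(X,d,\mu)$. The doubling condition gives $\Hd X \leq \log_2 C_\mu < \infty$, settling the first hypothesis. For the second, I would invoke the by-now-classical fact that in a PI space $\Lip(f,\cdot)$ is a representative of the minimal upper gradient of every Lipschitz $f$; the proof is a telescoping/chaining argument along curves controlled by the $(1,p)$-Poincar\'e inequality and by the doubling of $\mu$.

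The substantive content then lies in the main theorem itself. Following the Alberti-representation strategy developed by Alberti--Cs\"ornyei--Preiss and refined by Bate, I would produce the chart structure of Definition \ref{lipDiffSpace} by finding, on each piece $U_i$ of a countable Borel decomposition, a Lipschitz map $\phi_i\colon X\to\R^{n_i}$ together with $n_i$ mutually independent Alberti representations of $\mu|_{U_i}$ aligned with the coordinate directions of $\phi_i$. Standard Alberti-representation machinery then supplies, for every Lipschitz $f\colon X\to\R$, a collection of $n_i$ directional derivatives at $\mu$-a.e.\ point of $U_i$, and the $\Lip$-upper-gradient hypothesis forces the resulting affine approximation to be sharp: otherwise one could subtract a linear combination of coordinates of $\phi_i$ and obtain a Lipschitz function with an upper gradient strictly smaller than its pointwise Lipschitz constant, contradicting the hypothesis.

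The main obstacle, and the paper's central innovation, is to ensure that the chart dimensions $n_i$ are finite and bounded by $\Hd X$. This is where Guth's multilinear Kakeya inequality enters. An Alberti representation decomposes $\mu$ into a family of curves with controlled tangent direction; thickening these curves yields a cover of $\spt \mu$ by $\delta$-neighbourhoods of Lipschitz graphs with prescribed slopes. With $n$ \emph{independent} Alberti representations one obtains $n$ transverse such families, and the multilinear Kakeya estimate should convert the resulting transverse-tube covering bound into the dimension estimate $\Hd \mu \geq n$ on any set of positive $\mu$-mass. I expect the delicate technical step to be the passage from straight tubes, which is the setting of the original multilinear Kakeya inequality, to neighbourhoods of Lipschitz graphs, together with the correct quantitative formulation of the ``independence'' of Alberti representations that makes Guth's bound applicable. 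Once this dimension estimate is in place, it caps each $n_i$ by $\Hd X$ and closes the argument.
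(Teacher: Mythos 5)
Your outline correctly identifies both the reduction and the paper's central innovation: one checks that a PI space has finite Hausdorff dimension (from doubling) and satisfies the $\Lip$--upper-gradient inequality \eqref{Lipug} (from the Poincar\'e inequality), and the heart of the matter is then showing, via Guth's multilinear Kakeya inequality for neighbourhoods of Lipschitz graphs, that a measure with $n$ independent Alberti representations has lower Hausdorff dimension at least $n$, which caps the chart dimensions by $\Hd X$. Where you diverge from the paper is the scaffolding that turns these ingredients into the differentiability structure. You propose to build, on each chart piece $U_i$, $n_i$ independent Alberti representations and then use ``standard Alberti-representation machinery'' to manufacture $n_i$ partial derivatives and argue that \eqref{Lipug} makes the resulting affine approximation sharp; this is essentially the route of \cite{Bate_2014} and requires a nontrivial ``partial derivatives assemble into the Cheeger derivative'' theorem. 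The paper instead proves the short measure-theoretic reduction in \Cref{prop:chart-decomp}: it suffices to show $\mu(\ind\phi)=0$ whenever $\phi\colon X\to\R^n$ with $n>\Hd X$, and the chart structure (existence and uniqueness of derivatives included) then falls out of an elementary iteration that appends to a candidate chart $\phi_n$ any Lipschitz function not differentiable with respect to it, thereby enlarging $\ind$. The hypothesis \eqref{Lipug} -- actually the weaker $*$-upper-gradient version -- is used only in \Cref{lem:c-null-in-PI} to show that the singular sets $\mathcal{S}(\phi)$ from the decomposition of \Cref{alberti-decomposition} are $\mu$-null inside $\ind\phi$; it plays no role in establishing the sharpness of the affine approximation. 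Both routes are sound, but the paper's sidesteps the derivative-construction machinery entirely, which is what makes its argument self-contained and concise.
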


One remarkable consequence of this theory is the non-biLipschitz embeddability of PI spaces into a large class of Banach spaces, and the relationship to the sparsest cut problem. See \cite{cheegerkleiner-rnp,MR2892612,lee2006lp,MR2630066}.

After the work of Cheeger, Keith \cite{keith} gave a second proof of Cheeger's theorem, clarifying many of Cheeger's ideas, by studying a ``Lip-lip'' inequality for Lipschitz functions on $X$.
Following this, Kleiner--Mackay \cite{kleinermackay-diffstructures} gave a refinement of the arguments of Cheeger and Keith, giving an accessible account of the results and proof strategy.

Sometime later, the first author \cite{Bate_2014} gave several characterisations of Lipschitz differentiability spaces in terms of a rich structure of rectifiable curves, known as an \emph{Alberti representation} of $\mu$ - a collection of rectifiable curves that gives rise to a \emph{partial derivative} of any Lipschitz function on $X$ at $\mu$ almost every point (see \Cref{def:alberti-representation}).
Assuming that $\mu$ has many \emph{independent} Alberti representations (see \Cref{def:independent}), the partial derivatives obtained from each Alberti representation can be combined to form the Cheeger derivative, similarly to how the gradient on Euclidean space consists of partial derivatives.

Alberti representations first appear in the work of Alberti \cite{MR1215412} on the rank one property of functions of bounded variation.
They later appeared in the work of Alberti, Cs\"ornyei and Preiss \cite{MR2827846,MR2185733} and Alberti and Marchese \cite{MR3494485} on the converse to Rademacher's theorem in Euclidean space.

After \cite{Bate_2014}, Alberti representations have been used to study other questions in the setting of metric spaces \cite{David_2015,1611.05284,MR3474327,1712.07139,Bate_Li_2017, MR3554700,MR3818093}.
The goal of this note is to use these recent developments, and connections to geometric measure theory and harmonic analysis, to give another independent proof of Cheeger's theorem.
By considering Alberti representations, the main difference in our strategy to the strategies of Cheeger--Keith--Kleiner--Mackay is that we mostly study 1-rectifiable subsets of $X$, rather than Lipschitz functions defined on $X$.
That is, in some sense, we study the ``tangent bundle'' of $X$, rather than the ``co-tangent bundle''.

Let $X$ be a metric space and $f\colon X\to\R$ Lipschitz.
A Borel $\rho\colon X \to \R$ is an \emph{upper gradient} of $f$ if, for every 1-Lipschitz $\gamma\colon [0,1]\to X$,
\[|f(\gamma(1))-f(\gamma(0))| \leq \int_0^1 \rho(\gamma(t)) \d t.\]
Our main result is the following.
\begin{theorem}\label{main} Let $(X,d,\mu)$ be a metric measure space with $\Hd X <\infty$.
  Suppose that there exists a $C\geq 1$ such that, for every Lipschitz $f\colon X \to \R$ and every upper gradient $\rho$ of $f$,
  \begin{equation}\label{Lipug}\Lip(f,x) \leq C\rho(x) \quad \mu \text{-a.e.\ } x\in X.\end{equation}
  Then $(X,d,\mu)$ is a Lipschitz differentiability space. Further, one can take $n_{i} \leq \Hd X$ for all $i \in \N$ in \Cref{lipDiffSpace}. \end{theorem}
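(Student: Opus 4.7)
The plan is to recast hypothesis \eqref{Lipug} as Keith's Lip-lip inequality, then extract independent Alberti representations of $\mu$, and finally to bound their number by $\Hd X$ via a new application of the multilinear Kakeya inequality.

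The first reduction is essentially immediate. For any Lipschitz $f \colon X \to \R$, the function
\[\operatorname{lip}(f,x) := \liminf_{r\to 0^+}\sup_{y \in B(x,r)\setminus\{x\}} \frac{|f(y)-f(x)|}{d(x,y)}\]
is, by a standard argument using the fundamental theorem of calculus along $1$-Lipschitz curves, an upper gradient of $f$. Applying \eqref{Lipug} with $\rho = \operatorname{lip}(f,\cdot)$ therefore yields
\[\Lip(f,x) \leq C \operatorname{lip}(f,x) \qquad \text{for $\mu$-a.e.\ $x \in X$}\]
for every Lipschitz $f$, which is precisely Keith's Lip-lip inequality.

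From there I would invoke the machinery of Keith, Bate and Schioppa: the Lip-lip inequality produces a countable Borel decomposition $X = \bigcup_i U_i$ together with Lipschitz maps $\phi_i \colon X \to \R^{n_i}$ such that $\mu|_{U_i}$ admits $n_i$ Alberti representations that are independent in the directions of $\phi_i$. By Bate's theorem, the existence of such an independent family forces every Lipschitz $f \colon X \to \R$ to be differentiable $\mu$-a.e.\ on $U_i$ with respect to $\phi_i$; this is exactly the Lipschitz differentiability property. The one remaining point is to ensure that $n_i \leq \Hd X$.

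This last bound -- the genuine novelty -- I would reduce to the general statement that a non-trivial Borel measure $\nu$ admitting $n$ independent Alberti representations must satisfy $\Hd(\spt \nu) \geq n$. The strategy is to fix a scale $\delta > 0$ and, at a generic point of $\spt \nu$, approximate the $n$ representations by $n$ families of nearly straight Lipschitz curves whose tangent directions under the chart map $\phi$ fall into $n$ uniformly transverse cones of $\R^n$. A definite fraction of the mass of $\nu$ then concentrates in the intersection of $\delta$-neighbourhoods of one Lipschitz graph chosen from each family. Guth's multilinear Kakeya inequality, suitably adapted from families of affine tubes to tubular neighbourhoods of Lipschitz graphs, should furnish the sharp upper bound on the Lebesgue measure of such $n$-fold intersections. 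Comparing this with the total mass converts into a lower bound on the $\delta$-covering number of $\spt \nu$, and letting $\delta \to 0$ produces $\Hd(\spt \nu) \geq n$. The most delicate step, and what I expect to be the main obstacle, is carrying out this graph version of multilinear Kakeya carefully enough that the constants depend only on the Lipschitz norms of the graphs and the transversality of the cones, and not on the local geometry of $X$ at the chosen scale; in particular, passing from straight tubes to Lipschitz tubes without losing the multilinear gain is the crux.
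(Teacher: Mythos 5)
Your first step---that $\operatorname{lip}(f,\cdot)$ is an upper gradient of a Lipschitz $f$, so that \eqref{Lipug} implies Keith's Lip-lip inequality---is correct. Your third step correctly identifies the novel dimension bound and that Guth's multilinear Kakeya is the engine; however, the adaptation you flag as ``the crux'' is not actually needed: \cite[Theorem 7]{MR3300318} is already stated for neighbourhoods of graphs of $1$-Lipschitz functions, so the paper can apply it directly after a dilation. It also estimates the $L^{n/(n-1)}$-dimension of the measure via \cite[Theorem~1.4]{MR1897575} rather than $\delta$-covering numbers of the support, which gives the stronger conclusion $\Hd\mu \geq n$ (not merely $\Hd(\spt\mu)\geq n$) needed to bound the chart dimension.

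The serious gap is the middle of your argument. You ``invoke the machinery of Keith, Bate and Schioppa'' to pass from the Lip-lip inequality to a chart decomposition with independent Alberti representations, and then cite ``Bate's theorem'' to get differentiability. This is circular: the statement ``Lip-lip implies Lipschitz differentiability'' is essentially Keith's theorem, which is precisely what the paper is re-proving by a third, self-contained route. Moreover, Keith's theorem assumes a doubling measure, which is strictly stronger than the paper's hypothesis $\Hd X<\infty$; so even granting the citation you would only recover a weaker version of \Cref{main}. And the step ``independent Alberti representations force differentiability'' misstates Bate's characterisation---\cite[Theorem 7.8]{Bate_2014} requires a \emph{universal} family of Alberti representations (roughly: representations in the $\phi$-direction of every cone, with quantitative speed), which is a strictly stronger condition than the existence of $n$ independent ones. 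The paper sidesteps all of this with a different reduction (\Cref{prop:chart-decomp}): it shows directly that if $\mu(\ind\phi)=0$ for every Lipschitz $\phi\colon X\to\R^N$, then $X$ is a Lipschitz differentiability space with chart dimension $\leq N-1$, by iteratively adjoining a non-differentiable coordinate and using $\ind\phi$ to guarantee uniqueness of derivatives. The only role Alberti representations then play is to prove that $\mu(\ind\phi)=0$ when $N>\Hd X$, via the decomposition $\ind\phi = S\cup\bigcup_iA_i$ of \Cref{alberti-decomposition}, the nullity of $S\in\mathcal{S}(\phi)$ coming from \eqref{Lipug-star} (\Cref{lem:c-null-in-PI}), and the dimension bound of \Cref{thm:alberti-dimension}. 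You would need to supply something in place of this reduction for your proof to be non-circular and to match the stated generality.
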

It is easy to check that \eqref{Lipug} holds in any PI space.
Indeed, it is obtained by combining the Poincar\'e inequality (see \cite[eq.\ 4.3]{cheeger-diff}) with \cite[Proposition 4.3.3]{keith}, and applying the Lebesgue differentiation theorem.
See also \cite[Proposition 4.26]{cheeger-diff}.
Consequently, \Cref{cheeger} follows from \Cref{main}.

For a Lipschitz $\phi\colon X \to \R^{n}$, the set
	\[\ind\phi = \{x\in X : \Lip(D\cdot \phi,x)>0 \ \forall D\in\mathbb S^{n-1}\}\]
will play a key role in this article. Indeed, after a short reduction (see \Cref{prop:chart-decomp}), to prove \Cref{main} it is sufficient to establish the following: if $n > \Hd X$, then $\mu(\ind\phi)=0$ for any Lipschitz $\phi\colon X \to \R^n$. The proof divides into the following two steps.

First, in \Cref{alberti-decomposition}, given a Lipschitz $\phi\colon X \to \R^n$, we find a decomposition $\ind\phi= \bigcup_{i = 1}^{\infty} A_{i} \cup S$ where $\mu \llcorner A_{i}$ has $n$ independent Alberti representations, and $S$ belongs to a certain class of singular sets denoted by $\mathcal S(\phi)$ (see \Cref{def:atilde}).
Under the hypothesis \eqref{Lipug}, $\mathcal S(\phi)$ subsets of $\ind\phi$ have measure zero (see \Cref{lem:c-null-in-PI}).
Therefore, if $\mu(\ind\phi)>0$, there exists $A\subset X$ of positive measure such that $\mu \llcorner A$ has $n$ independent Alberti representations.

The second step is to show that any positive measure on $X$ with $n$ independent Alberti representations -- in particular $\mu\llcorner A$ -- has Hausdorff dimension at least $n$. This implies that $n \leq \dim_{\mathrm{H}} A \leq \dim_{\mathrm{H}} X$ and completes the proof.

Recent developments give several ways to accomplish this second step.
First, the statement follows from a deep result of De Philippis and Rindler \cite{dephilippisrindler} regarding the structure of measures on Euclidean space satisfying a PDE constraint.
Indeed, using this result, De Philippis, Marchese and Rindler \cite{MR3701738} show that any measure on $\mathbb R^n$ with $n$-independent Alberti representations is absolutely continuous with respect to Lebesgue measure.
Similarly, the required dimension estimate follows from the related work of Arroyo-Rabasa \cite{MR4042849}.

Here we give another, elementary, proof (see \Cref{thm:alberti-dimension}) using a version of the multilinear Kakeya inequality. The inequality we need is due to Guth \cite[Theorem 7]{MR3300318}. It is a generalisation (for neighbourhoods of Lipschitz graphs) of the original multilinear Kakeya inequality (for nearly axis-parallel tubes), due to Bennett, Carbery, and Tao \cite[Theorem 1.15]{MR2275834}. 

We also mention that David \cite{David_2015} and Schioppa \cite{MR3474327} showed earlier that the \emph{Assouad dimension} of $A$ is at least $n$. This conclusion is weaker than ours, since Assouad dimension is generally larger than Hausdorff dimension.
Unlike these results, and also unlike the approaches of Cheeger--Keith--Kleiner--Mackay, by using the multilinear Kakeya inequality, our approach does not involve Gromov--Hausdorff tangents of $X$.

The paper consists of six sections.
In \Cref{sec:reduction} we prove the reduction involving $\ind\phi$ mentioned above.

In \Cref{sec:fragments} we give the definition of an Alberti representation and develop a simple, intrinsic construction of an Alberti representation.
In particular, our proof of \Cref{lem:c-null} is new.  Previous approaches relied on isometric embeddings into Banach spaces.

In \Cref{sec:alberti} we define independent Alberti representations and prove the decomposition result involving $\mathcal S(\phi)$ sets mentioned above.
The results in this section are a refinement of results from \cite{Bate_2014}.

In \Cref{sec:alberti-hausdorff} we use Guth's proof of the multilinear Kakeya inequality to show that any set which supports a positive measure with $n$ independent Alberti representations must have Hausdorff dimension at least $n$.

In \Cref{conclusion}, we give a general criterion for a space $(X,d,\mu)$ to be a Lipschitz differentiability space (see \Cref{thm:main}) and show that it implies a stronger version of \Cref{main} (see \Cref{main-star}).

Finally, we mention that both \Cref{thm:main} and \Cref{main-star} are characterisations of Lipschitz differentiability spaces (and hence also of spaces satisfying Keith's hypotheses, see \cite[Corollary 10.5]{Bate_2014}), up to a countable decomposition of $X$.
The converse directions are a direct consequence of \cite[Theorem 6.9]{Bate_2014} and \cite[Theorem 7.8]{Bate_2014} respectively, and were the initial motivation for studying Alberti representations of metric measure spaces.

\section{A reduction}\label{sec:reduction}
We require the following standard result of measure theory:
Suppose that $\mu$ is a $\sigma$-finite measure on $X$ and that $\mathcal T$ is a collection of $\mu$ measurable sets such that any positive measure subset of $X$ contains an element of $\mathcal T$ of positive measure. 
Then $\mu$ almost all of $X$ can be decomposed into a countable union of sets from $\mathcal T$.

\begin{proposition}\label{prop:chart-decomp}
        Let $(X,d,\mu)$ be a metric measure space and
        suppose that there exists $N\in\N$ such that, for any
        Lipschitz $\phi\colon X \to \R^N$, $\mu(\ind\phi)=0$.
        Then $(X,d,\mu)$ is a Lipschitz differentiability space. Moreover, in \Cref{lipDiffSpace}, we can take $n_{i} \leq N - 1$ for all $i \in \N$.
\end{proposition}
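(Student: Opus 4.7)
The plan is to invoke the measure-theoretic selection principle stated just above the proposition. Let $\mathcal{T}$ denote the collection of Borel sets $U \subset X$ admitting some Lipschitz $\phi \colon X \to \R^n$ with $n \leq N-1$ such that every Lipschitz $f \colon X \to \R$ is differentiable with respect to $\phi$ at $\mu$-a.e.\ point of $U$. It will suffice to show that every Borel set $E \subset X$ with $\mu(E) > 0$ contains a positive-measure element of $\mathcal{T}$; the principle then produces a countable decomposition satisfying \Cref{lipDiffSpace} with $n_i \leq N-1$.

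Given such $E$, I would make an extremal choice: let $n^{*} \in \{0,\dots,N-1\}$ be the largest integer for which some Lipschitz $\phi^{*} \colon X \to \R^{n^{*}}$ satisfies $\mu(E \cap \ind \phi^{*}) > 0$. This is well-defined, because for $n = 0$ the set $\ind \phi$ equals $X$ by the vacuous quantifier over $\mathbb S^{-1}$, giving $\mu(E \cap \ind \phi) = \mu(E) > 0$, while the hypothesis of the proposition rules out $n = N$. Set $U := E \cap \ind \phi^{*}$, which has positive measure, and claim that $U \in \mathcal{T}$ with witness $\phi^{*}$.

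To prove this claim, suppose for contradiction that some Lipschitz $f \colon X \to \R$ fails to be differentiable with respect to $\phi^{*}$ on a positive-measure Borel subset $U' \subset U$, and form $\psi := (\phi^{*}, f) \colon X \to \R^{n^{*}+1}$. The crucial step is to verify $U' \subset \ind \psi$: this will contradict the maximality of $n^{*}$, or, if $n^{*} + 1 = N$, contradict the hypothesis directly. Decomposing a unit vector as $D = (D', t) \in \mathbb S^{n^{*}}$ with $D' \in \R^{n^{*}}$ and $t \in \R$, write $D \cdot \psi = D' \cdot \phi^{*} + t f$. The case $t = 0$ is immediate from $x \in \ind \phi^{*}$. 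For $t \neq 0$, suppose toward contradiction that $\Lip(D \cdot \psi, x) = 0$; then
\[
  f(y) - f(x) = -t^{-1} D' \cdot (\phi^{*}(y) - \phi^{*}(x)) + o(d(x,y))
\]
as $y \to x$, so the linear map $L(v) := -t^{-1} D' \cdot v$ realises the derivative limit for $f$ at $x$ with respect to $\phi^{*}$.

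The main obstacle lies in handling the uniqueness clause in the definition of differentiability: if some other linear $L'$ also realised the limit, then $\Lip((L - L') \cdot \phi^{*}, x) = 0$, and normalising the nonzero $L - L'$ to a unit vector in $\R^{n^{*}}$ would contradict $x \in \ind \phi^{*}$. Hence $L$ is the unique such map, so $f$ is genuinely differentiable at $x$ with respect to $\phi^{*}$, contradicting $x \in U'$. Thus $\Lip(D \cdot \psi, x) > 0$ for every $D \in \mathbb S^{n^{*}}$, so $U' \subset \ind \psi$ and the desired contradiction is achieved.
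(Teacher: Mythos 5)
Your proposal is correct and follows essentially the same route as the paper's proof: the key observation (non-differentiability of $f$ with respect to $\phi$ at a point $x_0 \in \ind\phi$ forces $x_0 \in \ind(\phi,f)$, with the uniqueness of the putative derivative handled exactly via $\ind\phi$) is identical, as is the reliance on the measure-theoretic selection principle. The only cosmetic difference is that you take a maximal $n^{*}$ in one step whereas the paper builds $\phi_1, \phi_2, \dots$ iteratively — the two are logically equivalent.
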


\begin{proof} We start with an observation: if $\phi \colon X \to \R^{n}$, $x_{0} \in \ind \phi$, and $f$ is \textbf{not} differentiable with respect to $\phi$ at $x_{0}$, then $x_{0} \in \ind (\phi,f)$. To see this, assume to the contrary that $x_{0} \notin \ind (\phi,f)$. Then there exists $D = (d_{1},\ldots,d_{n + 1}) \in \mathbb{S}^{n}$ such that
\begin{equation}\label{form2} \limsup_{x \to x_{0}} \frac{|D \cdot ((\phi,f)(x) - (\phi,f)(x_{0}))|}{d(x,x_{0})} = 0. \end{equation} 
Since $x_{0} \in \ind \phi$, we have $d_{n + 1} \neq 0$, and hence we can define $D_{1} := (\tfrac{d_{1}}{d_{n + 1}},\ldots,\tfrac{d_{n}}{d_{n + 1}}) \in \R^{n}$. It now follows from \eqref{form2} that
\begin{equation}\label{form3} \limsup_{x \to x_{0}} \frac{|(f(x) - f(x_{0})) - D_{1} \cdot (\phi(x) - \phi(x_{0}))|}{d(x,x_{0})} = 0. \end{equation} 
This implies that $f$ is differentiable at $x_{0}$, contrary to our assumption. A fine point to observe here is that the vector $D_{1} \in \R^{n}$ satisfying \cref{form3} is unique, which is part of the requirement that $f$ is differentiable at $x_{0}$: if there existed another $D_{2} \in \R^{n}$ such that that \eqref{form3} holds, then $\Lip((D_{2} - D_{1}) \cdot \phi,x_{0}) = 0$, which implies $D_{1} = D_{2}$ by the assumption $x_{0} \in \ind \phi$.

	  Now we start the proof of the proposition in earnest.  Let $\mathcal T$ be the collection of all Borel $U\subset X$ for which there exist an $n\in \N$ and a Lipschitz $\phi\colon X \to \R^n$ such that every Lipschitz $f\colon X \to \R$ is differentiable $\mu$-a.e.\ in $U$ with respect to $\phi$.

        Let $U \subset X$ be a Borel set of positive measure.
        Either every Lipschitz $\phi \colon X \to \R$ satisfies $\Lip(\phi,x)=0$ for $\mu$ almost every $x\in U$ or there exists a Lipschitz $\phi_1\colon X \to \R$ with $\mu(\ind\phi_1\cap U)>0$.
        Given the first option we stop; in this case $U\in \mathcal T$, because every Lipschitz $f \colon X \to \R$ is differentiable with respect to the constant map $\phi \colon X \to \R^{0} = \{0\}$. 
        
        Otherwise we proceed iteratively. Suppose that, for some $n\in \N$, there exists a Lipschitz $\phi_n \colon X \to \R^n$ with $\mu(\ind\phi_n\cap U)>0$.
        Then either
        \begin{itemize}
                \item every Lipschitz $f\colon X \to \R$ is differentiable $\mu$-a.e.\ in $\ind\phi_n\cap U$ with respect to $\phi_n$, or
                \item there exists $\phi^{n + 1} \colon X \to \R$ which is not differentiable with respect to $\phi_{n}$ in a subset of $\ind \phi_{n}$ of positive $\mu$ measure. In this case, by the observation at the beginning of the proof, $\phi_{n+1}:=(\phi_{n},\phi^{n+1})$ satisfies $\mu(\ind\phi_{n+1}\cap U)>0$.
        \end{itemize}
        Note that, in the first option, the uniqueness of the derivative of any Lipschitz $f \colon X \to \R^{n}$ with respect to $\phi_{n}$ at all $x_{0} \in \ind \phi_{n} \cap U$ is guaranteed because $x_{0}\in \ind\phi_{n}$. As we already saw before, if $D_{1},D_{2}$ are two derivatives of a $f$ at $x_{0}$, then $\Lip((D_{1}-D_{2})\cdot \phi_{n},x_{0})=0$, so that $D_{1}=D_{2}$.

        By hypothesis, the second option cannot hold for $n = N-1$, and so for some $n \leq N - 1$ the first option holds. Then $\ind\phi_n \cap U \in\mathcal T$.
	    Applying the standard measure theory result to $\mathcal T$ completes the proof.
\end{proof}

\section{Alberti representations and analysis of curve fragments}\label{sec:fragments}

We will consider the following structure of 1-rectifiable subsets of a metric measure space.
Let $I=[0,1]$.
Given a metric space $X$ we write $\mathcal K(X)$ for the set of non-empty compact subsets of $I\times X$ equipped with the Hausdorff metric $d_{H}$.
If $X$ is complete and separable, respectively compact, then so is $\mathcal K(X)$.
\begin{definition}
	\label{def:Gamma}
	For a metric space $X$, the set of \emph{curve fragments} in $X$ is
        \[\Gamma(X)=\{\gamma\colon \dom\gamma \subset I \to X : \dom\gamma \text{ compact, non-empty, }\gamma \text{ 2-biLipschitz}\}.\]
        We emphasise that the domain of each $\gamma\in \Gamma(X)$ does not need to be connected.

        Given $\gamma\in \Gamma(X)$, the \emph{graph} of $\gamma$ is
        \[\graph(\gamma) = \{(t,\gamma(t)) : t\in\dom\gamma\} \subset I \times X.\]
        We metrise $\Gamma(X)$ by setting $d_{\Gamma}(\gamma_{1},\gamma_{2}) := d_{H}(\graph(\gamma_{1}),\graph(\gamma_{2}))$. This formula indeed defines a metric on $\Gamma(X)$, because if $d_{H}(\graph(\gamma_{1}),\graph(\gamma_{2})) = 0$, then $\dom \gamma_{1} = \dom \gamma_{2}$ and $\gamma_{1} \equiv \gamma_{2}$ on $\dom\gamma_{1}$.
        \end{definition}
Whenever it is clear from the context, we will identify $\gamma$ with its image $\im\gamma = \gamma(\dom \gamma)$. We will consider integral combinations of the measures 
\begin{displaymath} \gamma_{*}\L^{1} := \gamma_{\ast}\mathcal{L}^{1}\llcorner (\dom \gamma), \end{displaymath}
where, in general, $f_{\ast}\nu$ refers to the push-forward of a measure $\nu$ under a map $f$.
To do so, we first consider the measurability of the integrand.
\begin{lemma}\label{pushforward-is-continuous}
 Let $X$ be a metric space and $C\subset X$ closed.
 Then
 \[\Gamma(X) \ni \gamma \mapsto \gamma_* \L^{1} (C)\]
 is upper semi-continuous.
 \end{lemma}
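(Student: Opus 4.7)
The plan is to unfold the definition and reduce the claim to a statement about Lebesgue measures of compact subsets of $I$. Setting
\[K_{n} := \{t \in \dom\gamma_{n} : \gamma_{n}(t) \in C\} \quad \text{and} \quad K := \{t \in \dom\gamma : \gamma(t) \in C\},\]
we have $\gamma_{n,*}\L^{1}(C) = \L^{1}(K_{n})$ and $\gamma_{*}\L^{1}(C) = \L^{1}(K)$. Since $C$ is closed and the $\gamma_{n},\gamma$ are continuous on their compact domains, each $K_{n}$ and $K$ is a compact subset of $I$. So it suffices to show that if $\gamma_{n} \to \gamma$ in $d_{\Gamma}$, then $\limsup_{n} \L^{1}(K_{n}) \leq \L^{1}(K)$.

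The key step is to show the Kuratowski upper limit inclusion: every subsequential limit $t_{\infty}$ of points $t_{n_{j}} \in K_{n_{j}}$ lies in $K$. Given such a sequence, the hypothesis $d_{H}(\graph\gamma_{n_{j}}, \graph\gamma) \to 0$ furnishes points $s_{j} \in \dom\gamma$ with $|s_{j} - t_{n_{j}}| \to 0$ and $d(\gamma(s_{j}), \gamma_{n_{j}}(t_{n_{j}})) \to 0$. Since $\dom\gamma$ is closed, $t_{\infty} = \lim_{j} s_{j} \in \dom\gamma$. The $2$-biLipschitz continuity of $\gamma$ gives $\gamma(s_{j}) \to \gamma(t_{\infty})$, and combining with the previous estimate, $\gamma_{n_{j}}(t_{n_{j}}) \to \gamma(t_{\infty})$. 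As $C$ is closed and each $\gamma_{n_{j}}(t_{n_{j}}) \in C$, we deduce $\gamma(t_{\infty}) \in C$ and hence $t_{\infty} \in K$.

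Once this containment is in hand, a standard compactness argument concludes. Namely, for any open set $U \supset K$ in $I$, we claim $K_{n} \subset U$ for all sufficiently large $n$. If not, some subsequence would contain points $t_{n_{j}} \in K_{n_{j}} \setminus U$; passing to a further subsequence (using compactness of $I$), we would get $t_{n_{j}} \to t_{\infty} \in I \setminus U$, contradicting the inclusion proven above since $K \subset U$. Thus $\L^{1}(K_{n}) \leq \L^{1}(U)$ eventually, so $\limsup_{n} \L^{1}(K_{n}) \leq \L^{1}(U)$. Taking the infimum over open $U \supset K$ and invoking outer regularity of $\L^{1}$ yields $\limsup_{n} \L^{1}(K_{n}) \leq \L^{1}(K)$, as desired.

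No single step is genuinely hard; the only moment that requires slight care is verifying that the limit parameter $t_{\infty}$ actually lies in $\dom\gamma$ rather than being lost along a sequence in $\dom\gamma_{n_{j}}$ that drifts off of $\dom\gamma$, which is prevented precisely by the Hausdorff closeness of graphs and the closedness of $\dom\gamma$.
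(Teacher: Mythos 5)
Your proof is correct and takes essentially the same approach as the paper's: both argue that $\gamma_n^{-1}(C)$ is eventually contained in any open neighborhood of $\gamma^{-1}(C)$ (via a compactness/subsequence argument using Hausdorff convergence of graphs and closedness of $C$), and then conclude by outer regularity of Lebesgue measure. Your write-up is slightly more explicit about why $\gamma_{n_j}(t_{n_j}) \to \gamma(t_\infty)$, which the paper elides, but the underlying reasoning is the same.
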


\begin{proof}
	Let $\gamma_n \to \gamma$ in $\Gamma(X)$.
	Observe that, for every $\epsilon>0$,
	\[\gamma_n^{-1}(C) \subset  B(\gamma^{-1}(C),\epsilon)\]
	for sufficiently large $n$.
	Indeed, if this were not true then (after passing to a subsequence) for each $n\in\N$ there exists $t_n\in\gamma_n^{-1}(C)$ with $d(t_n,\gamma^{-1}(C))\geq \epsilon$.
	There exists $t\in I$ such that (after passing to a further subsequence)  $t_{n}\to t$ and so, since  $\gamma_n\to \gamma$, $\gamma_n(t_n)\to \gamma(t)$.
	Since $C$ is closed, $\gamma(t)\in C$, hence $t \in \gamma^{-1}(C)$, and this contradicts $d(t_{n},\gamma^{-1}(C)) \geq \epsilon$ for all $n\in\N$.

        It follows that
        \[\epsilon_{n} := \L^{1}(\gamma_n^{-1}(C)\setminus \gamma^{-1}(C)) \leq \L^{1}(B(\gamma^{-1}(C),\epsilon)\setminus \gamma^{-1}(C))\]
	for $n \in \N$ sufficiently large. Since $C \subset X$ is closed, the right hand side above can be taken arbitrarily small by choosing $\epsilon > 0$ small. This implies that
	\begin{displaymath} \limsup_{n \to \infty} \, (\gamma_{n})_{\ast}\mathcal{L}^{1}(C) \leq \gamma_{\ast}\mathcal{L}^{1}(C) + \limsup_{n \to \infty} \, \epsilon_{n} = \gamma_{\ast}\mathcal{L}^{1}(C), \end{displaymath}
	as required.
	\end{proof}

\begin{lemma}\label{lem:integrand-is-measurable}
  Let $X$ be a metric space and $\P$ a finite Borel measure on $\Gamma(X)$.
  For any Borel $B\subset X$, the map $\gamma\mapsto \gamma_{*}\L^{1}(B)$ is Borel, and the set function $\nu(\P)$ defined by
  \begin{equation}
    \label{eq:defn-of-measure}
    \nu(\P)(B)=\int_{\Gamma(X)} \gamma_{\ast}\mathcal{L}^{1}(B) \d \P(\gamma)
  \end{equation}
 for any Borel $B\subset X$ is a Borel measure.
\end{lemma}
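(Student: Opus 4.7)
The plan is to split the statement into its two parts and reduce both to Lemma 3.2 combined with standard measure-theoretic arguments. The core observation is that for every $\gamma \in \Gamma(X)$, $\gamma_{\ast}\mathcal{L}^{1}$ is a finite Borel measure on $X$ with $\gamma_{\ast}\mathcal{L}^{1}(X) = \mathcal{L}^{1}(\dom \gamma) \leq 1$; this uniform finiteness will let me subtract freely.

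For the Borel measurability of $\gamma \mapsto \gamma_{\ast}\mathcal{L}^{1}(B)$, I would run a Dynkin class argument. Let $\mathcal{A}$ be the collection of Borel sets $B \subset X$ for which $\gamma \mapsto \gamma_{\ast}\mathcal{L}^{1}(B)$ is Borel. By \Cref{pushforward-is-continuous}, the map is upper semi-continuous, hence Borel, whenever $B$ is closed; in particular $X \in \mathcal{A}$ and all closed subsets of $X$ lie in $\mathcal{A}$. I would then verify that $\mathcal{A}$ is a $\lambda$-system. Closure under proper differences: if $A \subset B$ with $A, B \in \mathcal{A}$, then
\[\gamma_{\ast}\mathcal{L}^{1}(B \setminus A) = \gamma_{\ast}\mathcal{L}^{1}(B) - \gamma_{\ast}\mathcal{L}^{1}(A),\]
a difference of finite-valued Borel functions, hence Borel. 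Closure under countable monotone unions: if $A_{n} \uparrow A$ with $A_{n} \in \mathcal{A}$, then pointwise continuity from below of the finite measure $\gamma_{\ast}\mathcal{L}^{1}$ gives $\gamma_{\ast}\mathcal{L}^{1}(A_{n}) \uparrow \gamma_{\ast}\mathcal{L}^{1}(A)$, so the limit function is Borel. Since the closed sets form a $\pi$-system generating the Borel $\sigma$-algebra on $X$, Dynkin's theorem yields $\mathcal{A} = \mathcal{B}(X)$.

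For the claim that $\nu(\P)$ is a Borel measure, the integrand in \eqref{eq:defn-of-measure} is now Borel for every Borel $B$, so the integral is well-defined. Clearly $\nu(\P)(\emptyset) = 0$. For countable additivity, let $B_{n} \subset X$ be pairwise disjoint Borel sets. Since each $\gamma_{\ast}\mathcal{L}^{1}$ is a Borel measure on $X$, we have pointwise in $\gamma$,
\[\gamma_{\ast}\mathcal{L}^{1}\bigl(\textstyle \bigcup_{n} B_{n}\bigr) = \sum_{n} \gamma_{\ast}\mathcal{L}^{1}(B_{n}).\]
Integrating against $\P$ and invoking Tonelli's theorem (the summands are non-negative and measurable in $\gamma$) gives
\[\nu(\P)\bigl(\textstyle \bigcup_{n} B_{n}\bigr) = \sum_{n} \int \gamma_{\ast}\mathcal{L}^{1}(B_{n}) \d \P(\gamma) = \sum_{n} \nu(\P)(B_{n}),\]
as required.

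The only real obstacle is the extension from closed to Borel sets in part (a); once \Cref{pushforward-is-continuous} and the uniform bound $\gamma_{\ast}\mathcal{L}^{1}(X) \leq 1$ are in hand, the Dynkin argument and the final Tonelli step are standard and should require no further input.
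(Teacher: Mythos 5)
Your proof is correct and takes essentially the same route as the paper: a Dynkin/$\lambda$-system argument built on \Cref{pushforward-is-continuous} (closed sets give Borel maps), with the uniform bound $\gamma_{\ast}\mathcal{L}^{1}(X) \leq 1$ licensing the subtraction step. You use the ``proper differences and monotone unions'' characterisation of a $\lambda$-system while the paper uses ``complements and disjoint countable unions,'' and you spell out the Tonelli step for countable additivity that the paper leaves as ``clear,'' but these are cosmetic differences.
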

\begin{proof}

It is easy to check that the set family 
\begin{displaymath} \mathcal{A} := \{A \subset X \text{ Borel} : \gamma \mapsto \gamma_{\ast}\mathcal{L}^{1}(A) \text{ is a Borel function}\} \end{displaymath} 
is a \emph{Dynkin system}: $X \in \mathcal{A}$, and $\mathcal{A}$ is stable under taking complements and countable unions of disjoint sets. For example: if $A \in \mathcal{A}$, then $\gamma \mapsto \gamma_{\ast}\mathcal{L}^{1}(A^{c}) = \gamma_{\ast}\mathcal{L}^{1}(X) - \gamma_{\ast}\mathcal{L}^{1}(A)$ is a Borel function, so $A^{c} \in \mathcal{A}$. Moreover, \Cref{pushforward-is-continuous} states that $\mathcal{A}$ contains the ($\pi$-system of) closed sets, so it follows from Dynkin's lemma that $\mathcal{A}$ equals the Borel sets. Hence the formula \eqref{eq:defn-of-measure} makes sense for all Borel $B \subset X$.
Once well defined, it is clear that $\nu(\P)$ defines a measure.
\end{proof}


 

\begin{definition}
  \label{def:alberti-representation}
          An \emph{Alberti representation} of a metric measure space $(X,d,\mu)$ consists of a Borel probability measure $\P$ on $\Gamma(X)$ such that $\mu \ll \nu(\P)$.
\end{definition}

The motivation for considering Alberti representations in \cite{Bate_2014} was the following.
Suppose that $f\colon X \to \R$ is Lipschitz and that $\gamma\in \Gamma(X)$.
Then the composition
\[f\circ \gamma \colon \dom\gamma \to \R\]
is Lipschitz and so is differentiable almost everywhere.
That is, for $\H^1$-a.e.\ $x \in \im\gamma$, there exists a \emph{partial derivative} of $f$ at $x$ given by $(f\circ \gamma)'(\gamma^{-1}(x))$.
The existence of an Alberti representation is precisely the condition required for such a partial derivative to exist $\mu$ almost everywhere on $X$.

In this article, we will only be interested in the geometric structure an Alberti representation imposes.
To describe this structure, we make the following definition. 

\begin{definition}\label{def:curve-direction}
        Let $X$ be a metric space, $\phi\colon X \to \R^n$ Lipschitz and $C \subset \R^n$ a cone. By a \emph{cone} in $\R^{n}$, we refer to sets of the form $C = \{v \in \R^{n} : |v \cdot w| \geq \theta\|v\|\}$ for some $w \in \S$ and $\theta \in (0,1)$.
 We say that $\gamma\in \Gamma(X)$ is a \emph{$C$-curve (with respect to $\phi$)} if there exists a $\delta>0$ such that
        \[\phi(\gamma(t))-\phi(\gamma(s))\in C\setminus B(0,\delta |s-t|) \quad \forall s,t \in\dom\gamma.\]

 An Alberti representation $\P,$ is a \emph{$C$-Alberti representation (with respect to $\phi$)} if $\P$-a.e.\ $\gamma$ is a $C$-curve (with respect to $\phi$).
                A Borel $S\subset X$ is \emph{$C$-null} (with respect to $\phi$) if $\H^1(\gamma\cap S)=0$ for each $C$-curve $\gamma$ (with respect to $\phi$).

        Often the function $\phi$ will be clear from the context and we will not explicitly mention it.
        In the case that $X=\R^n$, we will only consider $\phi=\operatorname{id}$.
\end{definition}

Observe that if $(X,d,\mu)$ supports a $C$-Alberti representation, then any $C$-null subset of $X$ is $\mu$-null.
In this section we show that the converse statement holds.

For a metric space $X$, let $\mathcal{M}(X)$ and $\mathcal{P}(X)$ be the set of Borel measures on $X$ with total mass $\leq 1$ and $= 1$, respectively. Note that $\mathcal{M}(X)$ and $\mathcal{P}(X)$ are compact metric spaces with respect to weak* convergence if $X$ is compact.
\begin{lemma}
  \label{lem:alberti-measures-are-compact}
  Let $X$ be a metric space and $K\subset \Gamma(X)$ compact.
  Then the set
  \[\mathcal N(K)=\{m\in\M(X): \exists \, \P\in \mathcal{P}(K) \text{ such that } m\leq \nu(\P)\}\]
  is convex and compact.
\end{lemma}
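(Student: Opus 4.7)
The plan is to establish convexity by taking convex combinations of the witnessing probability measures, and compactness via a weak* sequential compactness argument on a compact subset of $X$ that contains all the relevant supports.

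Convexity is immediate: if $m_{1},m_{2} \in \mathcal N(K)$ are witnessed by $\P_{1},\P_{2} \in \mathcal P(K)$ and $\lambda \in [0,1]$, then by linearity of $\P \mapsto \nu(\P)$,
\begin{displaymath} \lambda m_{1} + (1-\lambda) m_{2} \leq \lambda \nu(\P_{1}) + (1-\lambda) \nu(\P_{2}) = \nu(\lambda \P_{1} + (1-\lambda) \P_{2}), \end{displaymath}
and $\lambda \P_{1} + (1-\lambda) \P_{2} \in \mathcal P(K)$, so $\lambda m_{1} + (1-\lambda) m_{2} \in \mathcal N(K)$.

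For compactness, the first step is to observe that $Y := \bigcup_{\gamma \in K} \im \gamma$ is compact in $X$. Since $K$ is compact in $\Gamma(X)$, the graphs $\{\graph \gamma : \gamma \in K\}$ form a compact subset of $(\mathcal K(X), d_{H})$; a compact family of compact subsets of a complete metric space has compact union (total boundedness and closedness of the union follow by elementary Hausdorff-metric arguments), and projecting from $I \times X$ to $X$ gives the compactness of $Y$. Since each $\gamma_{\ast}\L^{1}$ with $\gamma \in K$ is supported in $Y$, every $m \in \mathcal N(K)$ is as well. Hence any sequence $m_{n} \in \mathcal N(K)$ with witnesses $\P_{n} \in \mathcal P(K)$ admits, by weak* compactness of $\mathcal P(K)$ and of $\M(Y)$, a subsequence along which $\P_{n} \to \P \in \mathcal P(K)$ and $m_{n} \to m \in \M(Y)$.

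The remaining task is to verify $m \leq \nu(\P)$. For $f \in C(Y)$ with $f \geq 0$, set $G_{f}(\gamma) := \int_{X} f \d(\gamma_{\ast}\L^{1})$. The layer-cake identity combined with \Cref{pushforward-is-continuous} and the reverse Fatou lemma (legitimate because each integrand is bounded by $1$) shows that $G_{f}$ is upper semi-continuous on $K$. Since $K$ is compact and $G_{f}$ is bounded, weak* convergence $\P_{n} \to \P$ yields $\limsup_{n} \int G_{f} \d \P_{n} \leq \int G_{f} \d \P$. Integrating the inequality $m_{n} \leq \nu(\P_{n})$ against $f$ and passing to the limit gives
\begin{displaymath} \int f \d m = \lim_{n} \int f \d m_{n} \leq \limsup_{n} \int G_{f} \d \P_{n} \leq \int G_{f} \d \P = \int f \d \nu(\P). \end{displaymath}
A routine outer regularity argument, approximating $\mathbf{1}_{U}$ from below by continuous functions for open $U \subset Y$, upgrades this to $m(B) \leq \nu(\P)(B)$ for all Borel $B$.

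The main subtlety I expect is precisely the failure of continuity of $\gamma \mapsto G_{f}(\gamma)$: curves with small finite domains can Hausdorff-converge to a curve with a full interval domain, at which $G_{f}$ jumps upward, so only one-sided semi-continuity holds. The direction of upper semi-continuity and the direction of the inequality $m_{n} \leq \nu(\P_{n})$ fortunately align, which is precisely what makes the limiting step succeed.
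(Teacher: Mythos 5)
Your proof is correct and follows essentially the same strategy as the paper: reduce to a compact ambient space, extract weak* limits of both the $m_{n}$ and the witnessing $\P_{n}$, and then establish $m \leq \nu(\P)$ from the upper semi-continuity of the push-forward map (\Cref{pushforward-is-continuous}) via the layer-cake formula and the reverse Fatou lemma. The only cosmetic difference is that you test against nonnegative continuous functions and then regularize, whereas the paper works directly with closed sets $C$ and their neighborhoods $\overline{C(\delta)}$ via Portmanteau-type estimates.
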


\begin{proof}
  Evidently $\mathcal N(K)$ is convex. To show compactness, first note that every measure in $\mathcal{N}(K)$ is supported on the compact set
  \begin{displaymath} \im K := \cup \{\im \gamma : \gamma \in K\}. \end{displaymath}
  Hence, replacing $X$ by $\im K$, we may assume that $X$ is compact. Now, let $\{m_{n}\}_{n \in \N} \subset \mathcal{N}(K) \subset \mathcal{M}(X)$. After passing to a subsequence, we may assume that $m_{n} \rightharpoonup m \in \mathcal{M}(X)$. So, it remains to show that $m \in \mathcal{N}(K)$. To this end, let $\P_{n}\in \mathcal{P}(K)$ be such that $m_{n}\leq \nu(\P_{n})$.
  Since $K$ is compact, we may suppose that there exists $\P\in \mathcal{P}(K)$ with $\P_{n}\rightharpoonup \P$. We then argue that $m \leq \nu(\tn)$.

  Let $C \subset X$ be closed, $\delta > 0$, and $C(\delta) := \{x \in X : \dist(x,C) < \delta\}$. By \Cref{pushforward-is-continuous}, the sets $\{\gamma \in K : \gamma_{\ast}\mathcal{L}^{1}(\overline{C(\delta)}) \geq \lambda\}$ are compact for $\lambda \in \R$.
  Therefore, using that $m_{n}\rightharpoonup m$, $\P_{n}\rightharpoonup \P$, $\gamma_{\ast}\mathcal{L}^{1}(X) \leq 1$, and the reverse Fatou lemma,
  \begin{align*}
    m(C) \leq \liminf_{n\to\infty}m_{n}(C(\delta)) & \leq \limsup_{n\to\infty} \int_{K} \gamma_{*}\L^{1}(\overline{C(\delta)})\d\P_{n}(\gamma)\\
    & = \limsup_{n \to \infty} \int_{0}^{1} \P_{n}\{\gamma \in K : \gamma_{\ast}\mathcal{L}^{1}(\overline{C(\delta)}) \geq \lambda\} \, d\lambda\\
    & \leq \int_{0}^{1} \P\{\gamma \in K : \gamma_{\ast}\mathcal{L}^{1}(\overline{C(\delta)}) \geq \lambda\} \, d\lambda = \nu(\P)(\overline{C(\delta)}).  \end{align*}
The right hand side converges to $\nu(\tn)(C)$ as $\delta \to 0$, so we have the inequality $m(C) \leq \nu(\tn)(C)$ for closed sets $C \subset X$. For a general Borel set $B \subset X$ and any $\epsilon > 0$, there exists a closed set $C \subset B$ with $m(B \setminus C) < \epsilon$. This implies $m(B) \leq \nu(\tn)(B)$. \end{proof}

\begin{proposition}
 \label{thm:formal-construct-alberti}
 Let $X$ be a metric space, $\mu$ a finite Borel measure on $X$ and $K\subset \Gamma(X)$ compact.
 Then, there exists a Borel decomposition $X=A\cup S$ and a $\mathbb P \in\mathcal P(K)$ such that $\mu\llcorner A \ll \nu(\P)$ and
 $\H^1(\gamma\cap S)=0$ for every $\gamma\in K$. 
\end{proposition}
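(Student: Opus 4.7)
My plan is a compactness-based variational argument. Let
\[ \mathcal{T} := \{A \subset X \text{ Borel} : \mu \llcorner A \ll \nu(\P_A) \text{ for some } \P_A \in \mathcal{P}(K)\}. \]
Using convex combinations $\P := \sum_n 2^{-n} \P_n$, one checks that $\mathcal{T}$ is closed under countable unions (since $\nu(\P) \geq 2^{-n}\nu(\P_n)$ for each $n$), and trivially under Borel subsets. Hence the supremum $\alpha := \sup\{\mu(A) : A \in \mathcal{T}\}$ is attained by some $A^\ast \in \mathcal{T}$ with $\mu \llcorner A^\ast \ll \nu(\P^\ast)$; set $S^\ast := X \setminus A^\ast$.

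The first required property, $\mu \llcorner A^\ast \ll \nu(\P^\ast)$, holds by construction. For the curve-null property I argue by contradiction. Suppose $\mathcal{L}^1(\gamma_0^{-1}(S^\ast)) > 0$ for some $\gamma_0 \in K$, and apply Lebesgue decomposition to $\mu \llcorner (S^\ast \cap \im \gamma_0)$ with respect to $(\gamma_0)_\ast \mathcal{L}^1 = \nu(\delta_{\gamma_0})$. The absolutely continuous part $\mu \llcorner E$ satisfies $\mu \llcorner E \ll \nu(\delta_{\gamma_0})$, so $E \in \mathcal{T}$. If $\mu(E) > 0$, then $A^\ast \cup E \in \mathcal{T}$ has strictly larger $\mu$-measure, contradicting the maximality of $\alpha$. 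Hence $\mu(E) = 0$, so $\mu \llcorner (S^\ast \cap \im \gamma_0) \perp (\gamma_0)_\ast \mathcal{L}^1$.

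This singularity furnishes a Borel set $F_{\gamma_0} \subset S^\ast \cap \im \gamma_0$ on which $\mu \llcorner (S^\ast \cap \im \gamma_0)$ is concentrated, with $(\gamma_0)_\ast \mathcal{L}^1(F_{\gamma_0}) = 0$. Its complement $N_{\gamma_0} := (S^\ast \cap \im \gamma_0) \setminus F_{\gamma_0}$ is then a Borel $\mu$-null set carrying the entire $\H^1$-mass of $\gamma_0 \cap S^\ast$. Transferring $N_{\gamma_0}$ from $S^\ast$ into $A^\ast$ preserves absolute continuity (only a $\mu$-null set is moved) and eliminates the single curve $\gamma_0$.

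The principal obstacle is to carry out this transfer simultaneously for \emph{every} $\gamma \in K$ while keeping the resulting $A$ Borel. The single-curve argument extends directly to any countable family $\{\gamma_i\}$ chosen dense in $K$ (which exists since the compact set $K$ is separable), yielding a Borel $\mu$-null set $N_0 := \bigcup_i N_{\gamma_i}$. To propagate the conclusion $\H^1(\gamma \cap (S^\ast \setminus N_0)) = 0$ from the dense family to all $\gamma \in K$, I would exploit the compactness of $K$ together with the compactness of $\im K = \bigcup_{\gamma \in K} \im \gamma$ in $X$ (as the projection of the compact graph in $I \times X$), the upper semi-continuity from \Cref{pushforward-is-continuous}, and a monotone-class argument in the spirit of \Cref{lem:integrand-is-measurable}. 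The aim is to produce a single Borel $F$ satisfying $(\gamma)_\ast \mathcal{L}^1(F) = 0$ for every $\gamma \in K$ and $\mu \llcorner (S^\ast \cap \im K)(X \setminus F) = 0$; setting $N := (S^\ast \cap \im K) \setminus F$, $A := A^\ast \cup N$, and $S := S^\ast \setminus N$ then completes the desired partition.
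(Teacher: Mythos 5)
Your first two steps are sound: the family $\mathcal{T}$ is closed under countable unions and subsets, so a maximiser $A^\ast \in \mathcal{T}$ with $\mu\llcorner A^\ast \ll \nu(\P^\ast)$ exists, and the maximality argument correctly shows that $\mu\llcorner(S^\ast \cap \im\gamma) \perp \gamma_\ast\mathcal{L}^1$ for each \emph{individual} $\gamma\in K$, whence one obtains, for each $\gamma$ separately, a Borel carrier $F_\gamma$ with $\gamma_\ast\mathcal{L}^1(F_\gamma)=0$ and $\mu\llcorner(S^\ast\cap\im\gamma)$ concentrated on $F_\gamma$. The difficulty is entirely in your last paragraph, and it is a genuine gap rather than a detail to be routinely filled in.

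What you need there is a \emph{single} Borel set $F$ with $\gamma_\ast\mathcal{L}^1(F)=0$ for \emph{every} $\gamma\in K$ while $\mu\llcorner(S^\ast\cap\im K)$ is concentrated on $F$; that statement is exactly the nontrivial content of the GKS (Glicksberg--K\"onig--Seever) decomposition theorem, which the paper invokes directly by applying \cite[Theorem 9.4.4]{rudin-unitball} to the compact convex set $\mathcal{N}(K)$. The tools you propose for bridging this gap do not do the job: the map $\gamma\mapsto\gamma_\ast\mathcal{L}^1(C)$ is only \emph{upper} semi-continuous for closed $C$ (\Cref{pushforward-is-continuous}), and upper semi-continuity goes the wrong way here --- if $\gamma_i\to\gamma$ and $\gamma_{i,\ast}\mathcal{L}^1(F)=0$ for a dense sequence, upper semi-continuity only gives $\gamma_\ast\mathcal{L}^1(F)\geq 0$, not $=0$, so nullity does not propagate from a countable dense family to all of $K$. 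Likewise a Dynkin/monotone-class argument controls measurability of a fixed set function, not simultaneous singularity against an uncountable family. In short, you have reduced the proposition to (a special case of) the GKS theorem without proving it; to complete the proof along your lines you would either need to cite GKS (as the paper does) or reproduce its proof, which relies on a different mechanism (a Hahn--Banach/minimax-type argument over the compact convex set $\mathcal{N}(K)$) rather than on semi-continuity along curves.
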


\begin{proof} Let $\mathcal N = \mathcal{N}(K)$ be as in \Cref{lem:alberti-measures-are-compact} and apply the GKS decomposition theorem \cite[Theorem 9.4.4]{rudin-unitball} to $\mathcal N$.
	The set $\mathcal N$ satisfies the hypotheses of the GKS decomposition theorem, except that its elements have total measure \emph{at most} 1, rather than exactly 1.
	However, the proof of the GKS theorem holds with minor superficial changes if we only assume the measures have total measure at most 1.
	
	The conclusion of the GKS theorem gives a decomposition $\mu = \mu_{1} + \mu_{2}$, a Borel set $S \subset X$, and a measure $m_{0} \in \mathcal{N}(K)$ such that $\mu_{1} \ll m_{0}$, $\mu_{2}$ is concentrated on $S$, and $m(S) = 0$ for all $m \in \mathcal{N}(K)$. In particular $\mu\llcorner A = (\mu_{1})\llcorner A \ll m_{0}$ for $A := X \setminus S$. Since $m_{0} \in \mathcal{N}(K)$, there further exists $\P \in \mathcal{P}(K)$ such that $\mu\llcorner A \ll \nu(\P)$, as claimed. Finally, if $\gamma\in K$, note that $\nu(\delta_{\gamma}) \in\mathcal N$, so $\gamma_{\ast}\mathcal{L}^{1}(S) = \nu(\delta_{\gamma})(S) = 0$. Since $\gamma \colon \dom \gamma \to X$ is biLipschitz, this implies that $\mathcal{H}^{1}(\gamma \cap S) = 0$. \end{proof}

By suitable measure theoretic manipulations, we arrive to the following characterisation of the existence of Alberti representations.

\begin{theorem}\label{lem:c-null}
        Let $(X,d,\mu)$ be a metric measure space, $\phi\colon X \to \R^n$ Lipschitz and $C\subset \R^n$ a cone.
        There exists a decomposition $X= A \cup S$ such that $\mu\llcorner A$ has a $C$-Alberti representation and $S$ is $C$-null.
\end{theorem}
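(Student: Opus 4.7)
The plan is to apply \Cref{thm:formal-construct-alberti} to a countable family of compact subsets of $\Gamma(X)$ that together see every $C$-curve, and combine the resulting decompositions. A routine reduction lets me assume $\mu$ is finite: write $X$ as a disjoint union of pieces of finite $\mu$-measure and combine their decompositions via disjoint unions for $A$, $S$ and a weighted sum for $\P$. A finite Borel measure on a Polish space is Radon, so I can pick a $\sigma$-compact Borel set $Y = \bigcup_{n} Y_{n} \subset X$ of full $\mu$-measure, with $Y_{n} \subset Y_{n+1}$ compact.

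For each $(k,n) \in \N^{2}$, set
\begin{displaymath} K_{k,n} := \{\gamma \in \Gamma(X) : \gamma \text{ is a $C$-curve with parameter } \geq 1/k, \ \im \gamma \subset Y_{n}\}. \end{displaymath}
The key technical step is to prove that $K_{k,n}$ is compact. Identifying $\gamma$ with $\graph \gamma$ embeds $K_{k,n}$ into the compact space $\mathcal{K}(I \times Y_{n})$, so it suffices to prove closedness. If $\graph \gamma_{j} \to K$ in the Hausdorff metric, and $(s,x),(s,y) \in K$ arise as limits of $(s_{j}, \gamma_{j}(s_{j}))$ and $(s_{j}', \gamma_{j}(s_{j}'))$ with $s_{j},s_{j}' \to s$, the 2-biLipschitz property forces $d(x,y) \leq 2 \lim_{j} |s_{j} - s_{j}'| = 0$, so $K$ is the graph of a function $\gamma$. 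Standard limit arguments then verify that $\gamma$ is 2-biLipschitz, satisfies $\im \gamma \subset Y_{n}$, and is a $C$-curve with parameter $\geq 1/k$ (using that $C$ is closed).

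Applying \Cref{thm:formal-construct-alberti} to each $K_{k,n}$ gives Borel decompositions $X = A_{k,n} \cup S_{k,n}$ and probabilities $\P_{k,n} \in \mathcal{P}(K_{k,n})$ with $\mu \llcorner A_{k,n} \ll \nu(\P_{k,n})$ and $\H^{1}(\gamma \cap S_{k,n}) = 0$ for every $\gamma \in K_{k,n}$. I set $S := Y \cap \bigcap_{k,n} S_{k,n}$, $A := X \setminus S = (X \setminus Y) \cup \bigcup_{k,n} A_{k,n}$, and $\P := \sum_{k,n} c_{k,n} \P_{k,n}$ for positive weights with $\sum c_{k,n} = 1$. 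Since $\nu(\P_{k,n}) \ll \nu(\P)$ for every $(k,n)$ and $\mu(X \setminus Y) = 0$, this yields $\mu \llcorner A \ll \nu(\P)$, and $\P$ is a $C$-Alberti representation because each $\P_{k,n}$ is supported on $C$-curves. For the $C$-nullity of $S$: given a $C$-curve $\gamma$ with parameter $\geq 1/k$, the restriction of $\gamma$ to $\gamma^{-1}(Y_{n})$ lies in $K_{k,n}$, so $\H^{1}(\gamma \cap Y_{n} \cap S) \leq \H^{1}(\gamma \cap S_{k,n}) = 0$; summing over $n$ and using $S \subset Y$ gives $\H^{1}(\gamma \cap S) = 0$. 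The main obstacle is the compactness claim for $K_{k,n}$; once it is in hand, the remainder is elementary measure-theoretic bookkeeping to glue the countably many pieces into a single decomposition.
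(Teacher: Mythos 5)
Your proof is correct and follows essentially the same route as the paper's: exhaust $X$ by compact sets $Y_{n}$ of finite measure, observe that the $C$-curves with parameter $\geq 1/k$ and image in $Y_{n}$ form compact sets $K_{k,n}$, apply \Cref{thm:formal-construct-alberti} to each, and glue. Two welcome refinements over the paper's write-up: you actually prove the compactness of $K_{k,n}$ (the paper merely asserts it), and your bookkeeping via the plain intersection $S = Y \cap \bigcap_{k,n} S_{k,n}$ is a bit cleaner than the paper's $\liminf/\limsup$ construction (which is needed there only because the paper restricts $A^{j}, S^{j}$ to $X_{j}$).
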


\begin{proof}
    Let $\Gamma_{C}$ be the set of $C$-curves in $\Gamma(X)$. Since $(X,d)$ is complete and separable and $\mu$ is $\sigma$-finite, there exists an increasing sequence of compact sets $X_j\subset X$ of finite measure such that $\mu(X\setminus \cup_j X_j)=0$.
	Fix a $j\in\N$.
	We will identify $\Gamma(X_j)$ with its isometric copy in $\Gamma(X)$. Note that the set $\Gamma_{C} \cap\Gamma(X_{j})$ is not compact, but it is $\sigma$-compact.
    Indeed, for each $i\in\N$, the set $K_i$ of $\gamma \in \Gamma_{C} \cap \Gamma(X_{j})$ with
    \[\|\phi(\gamma(t))-\phi(\gamma(s))\| \geq |t-s|/i\]
    is compact and $\cup_i K_i = \Gamma_{C} \cap \Gamma(X_{j})$.
    
    Apply \Cref{thm:formal-construct-alberti} to $\mu \llcorner X_{j}$ and $K_i$.
    This gives a measure $\P_i \in \mathcal{P}(K_i)$ and Borel decomposition $X_j=A_i \cup S_i$ such that $\mu\llcorner A_i \ll \nu(\P_i)$ and $\H^1(\gamma\cap S_i)=0$ for every $\gamma\in K_i$.
    Repeat this for each $i\in\N$ and set
    \[\P^j = \sum_i 2^{-i}\P_i \in \mathcal{P}(\Gamma_{C} \cap \Gamma(X_{j})), \quad A^j=\cup_i A_i, \quad S^j=\cap_i S_i.\]
    Then $X_j = A^j\cup S^j$ and $\H^1(\gamma\cap S^j)=0$ for every $\gamma\in \Gamma_{C} \cap\Gamma(X_j)$.
    Also, since $\nu(\P^j) = \sum_i 2^{-i}\nu(\P_i)$, we have $\mu\llcorner A^j \ll \nu(\P^j)$.

    Now set
    \[A=(X\setminus \cup_j X_j) \cup \bigcap_n \bigcup_{j>n} A^j, \quad S= \bigcup_n \bigcap_{j>n} S^j, \quad \P= \sum_{j} 2^{-j} \P^j \in \mathcal{P}(\Gamma_{C}).\]
    Then $\mu\llcorner A \ll \nu(\P)$.
    Also, since the $X_j$ are increasing, $X=A\cup S$ and $\H^1(\gamma\cap S)=0$ for every $\gamma \in \cup_j\Gamma(X_j) \cap \Gamma_{C}$.
    Finally, we can write any $\gamma\in \Gamma_{C}$ as
    \[\gamma = [(X\setminus \cup_j X_j) \cap \gamma] \cup \bigcup_{j} [X_j \cap \gamma],\]
    where each $X_j\cap \gamma \in \Gamma_{C} \cap \Gamma(X_j)$.
    Since $S\subset \cup_j X_j$, this implies that $S$ is $C$-null.
\end{proof}

\begin{remark}
	\Cref{lem:c-null} is often vacuous outside $\ind\phi$.
	Consider, for example, $\phi(x)=(x,x)$ on $\R$ and any cone centred on $(1,-1)$. 
\end{remark}

\section{Existence of independent Alberti representations}\label{sec:alberti}

In this section we give criteria for when a metric measure space possesses many ``independent'' Alberti representations.
We say that cones $C_1,\ldots,C_n \subset \R^n$ are \emph{independent} if $\{v_{1},\ldots,v_{n}\}$ is a linearly independent set for any choices of $v_{i} \in C_i\setminus\{0\}$, $1\leq i\leq n$.
\begin{definition}\label{def:independent}
		A collection of Alberti representations $\P_1,\ldots,\P_n$ of a metric measure space $(X,d,\mu)$
        is \emph{independent} if there exists a Lipschitz $\phi\colon X \to \R^n$ and independent cones $C_1,\ldots,C_n \subset \R^n$ such that $\P_i$ is a $C_i$-Alberti representation with respect to $\phi$ for each $1\leq i\leq n$.
\end{definition}

By applying \Cref{lem:c-null} multiple times using a collection of independent cones $C_1,C_2,\ldots,C_n$, we obtain a decomposition $X= A \cup S_1 \cup S_2 \cup \ldots \cup S_n$ where each $S_i$ is $C_i$-null and $\mu\llcorner A$ has $n$ independent Alberti representations.
However, in practice, we can only deduce that a $C$-null set is $\mu$-null whenever $C$ is very wide.
We now work towards a similar decomposition involving wide cones.

\begin{lemma}\label{lem:cone-null}
        Let $X$ be a metric space and $\phi\colon X \to \R^n$ Lipschitz.
Suppose that $S\subset X$ is $C$-null with respect to $\phi$.
Then for any $\gamma \in \Gamma(X)$,
\[\mathcal{H}^{1}(\{t \in \gamma^{-1}(S) : (\phi\circ\gamma)'(t) \in \inti(C)\}) = 0.\]
\end{lemma}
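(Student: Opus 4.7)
The strategy is to show $\L^1(E) = 0$, where $E := \{t \in \gamma^{-1}(S) : (\phi\circ\gamma)'(t) \in \inti(C)\}$; since $E \subset \R$, this gives $\H^1(E) = 0$. Extend $\phi\circ\gamma$ via McShane to a Lipschitz map $g : I \to \R^n$. Then $g$ is differentiable $\L^1$-a.e.\ on $I$, and at every Lebesgue density point $t \in \dom\gamma$ at which $g'(t)$ exists, $(\phi\circ\gamma)'(t)$ exists and equals $g'(t)$. So up to an $\L^1$-null set, $E$ coincides with the set of density points $t \in \dom\gamma \cap \gamma^{-1}(S)$ with $g'(t) \in \inti(C)$. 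Cover $\inti(C)$ by countably many compact convex sets $\{C_k\}_{k\in\N}$, each contained in one of the two convex components of $\inti(C)$ and satisfying $\|v\| \geq \alpha_k$ and $\dist(v,\partial C) \geq \rho_k\|v\|$ for some $\alpha_k,\rho_k > 0$ and all $v \in C_k$. Setting $E_k := \{t \in E : g'(t) \in C_k\}$, it suffices to show $\L^1(E_k) = 0$ for each $k$.

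Fix $k$ and $\eps > 0$. Using inner regularity together with Egorov's theorem applied to the density quotients $r \mapsto \L^1(B(\cdot,r) \cap E_k)/(2r)$ (which converge to $1$ a.e.\ on $E_k$), I obtain a compact $F \subset E_k$ and $\delta > 0$ with $\L^1(E_k \setminus F) < \eps$ and
\[
\L^1(B(t_0, r) \cap F) \geq (1 - \eps)\cdot 2r \qquad \forall\, t_0 \in F,\ 0 < r \leq \delta.
\]
Split $F$ into finitely many compact pieces $F_1,\dots,F_N$ of diameter less than $\delta$. The key claim is that each restricted fragment $\gamma|_{F_j}$ is a $C$-curve. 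Granting this, $C$-nullness of $S$ gives $\H^1(\gamma(F_j) \cap S) = 0$, and the $2$-biLipschitz property of $\gamma$ upgrades this to $\L^1(F_j \cap \gamma^{-1}(S)) = 0$; since $F_j \subset E_k \subset \gamma^{-1}(S)$, we get $\L^1(F_j) = 0$. Summing over $j$ yields $\L^1(F) = 0$, hence $\L^1(E_k) < \eps$, and letting $\eps \to 0$ concludes.

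To prove the claim, fix $s < t$ in $F_j$ and use the fundamental theorem of calculus:
\[
g(t) - g(s) = \int_{[s,t] \cap F} g'(u)\,\d u + \int_{[s,t] \setminus F} g'(u)\,\d u.
\]
The first integral is $\L^1([s,t] \cap F)$ times an average of vectors in the convex set $C_k$, so it lies in $C_k$. Applying the density lower bound at $t_0 = s$ with $r = t - s < \delta$ gives $\L^1([s,t] \setminus F) \leq 2\eps(t-s)$, so the first integral has norm at least $\alpha_k(1 - 2\eps)(t-s)$ and lies at distance at least $\rho_k\alpha_k(1 - 2\eps)(t-s)$ from $\partial C$, while the second integral has norm at most $2\eps\Lip(g)(t-s)$. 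For $\eps$ small (depending on $k$ and $\Lip(g)$), $g(t) - g(s)$ remains in $C$ with norm at least $\alpha_k(t-s)/2$, verifying the $C$-curve condition (the symmetry $-C = C$ handles the case $s > t$).

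The main obstacle is precisely this last step. Pointwise differentiability of $g$ at $t_0 \in E$ does not control the difference quotients $(g(t)-g(s))/(t-s)$ for pairs with $|t-s|$ much smaller than $\max(|s-t_0|,|t-t_0|)$, so one cannot deduce the $C$-curve condition by examining only pairs anchored at a single density point. Writing $g(t) - g(s) = \int_s^t g'(u)\,\d u$ and combining with uniform Lebesgue density of $F$ replaces this pointwise information with the required symmetric two-variable estimate.
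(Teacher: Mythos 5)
Your proposal is correct in spirit and reaches the conclusion, but it takes a genuinely longer route than the paper and contains one small mismatch that should be fixed. The paper avoids the McShane extension, the FTC, and the Egorov density argument entirely. It instead observes that the set $\{t \in \gamma^{-1}(S) : (\phi\circ\gamma)'(t) \in \inti(C)\}$ is covered by the countable family of sets
\begin{equation*}
B_{i,R} = \Big\{t\in \gamma^{-1}(S) : \tfrac{\phi(\gamma(t+r))-\phi(\gamma(t))}{r} \in C\setminus B(0,\tfrac{1}{i}) \ \text{ for all } r>0 \text{ with } t+r\in\dom\gamma, \ r<R \Big\},
\end{equation*}
so some $B_{i,R}$ has positive measure. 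Then one picks a compact $K \subset B_{i,R}$ of positive measure and diameter less than $R$, and for $s<t$ in $K$ the quotient $(\phi\circ\gamma(t) - \phi\circ\gamma(s))/(t-s)$ \emph{is} the forward quotient at $s$ with $r=t-s<R$, hence lies in $C\setminus B(0,1/i)$; symmetry of $C$ covers $s>t$. Thus $\gamma\llcorner K$ is already a $C$-curve, contradicting $C$-nullness. The obstruction you identify at the end of your writeup --- that the derivative at a single $t_0$ controls only quotients anchored at $t_0$ --- is real, but it is sidestepped not by the FTC/density machinery but by first performing this Lusin-type decomposition into scale-uniform pieces $B_{i,R}$; once both $s$ and $t$ lie in one such piece, the two-point quotient is a single-point forward quotient.

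Regarding the small mismatch in your argument: Egorov applied to the density quotients $r\mapsto\L^1(B(\cdot,r)\cap E_k)/(2r)$ gives, for $t_0\in F$ and $r\le\delta$, the bound $\L^1(B(t_0,r)\cap E_k)\ge(1-\eps)2r$, not the bound for $F$ that you state; the set $E_k\setminus F$ may concentrate near $t_0$ at scales comparable to $r$, so the bound does not transfer to $F$. This is harmless, because the FTC split should in any case be taken over $[s,t]\cap E_k$ and $[s,t]\setminus E_k$: you already have $g'(u)\in C_k$ for $\L^1$-a.e.\ $u\in E_k$, the density bound for $E_k$ then gives $\L^1([s,t]\setminus E_k)\le2\eps(t-s)$ for $s,t\in F$ with $t-s\le\delta$, and the rest of your estimate is unchanged. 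The role of $F$ is only to supply compact domains $F_j$ whose points satisfy the density estimate with respect to $E_k$. With that correction the argument is sound, and the trade-off relative to the paper is clear: your version buys a symmetric two-variable estimate via $g(t)-g(s)=\int_s^t g'$ and a uniform density bound, at the cost of the extension, the compact convex cover $\{C_k\}$, and the Egorov step; the paper's version gets the same two-variable control for free from the definition of $B_{i,R}$.
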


\begin{proof}
Suppose that the conclusion is false: for some $\gamma\in \Gamma(X)$
\[\mathcal{H}^{1}(\{t\in \gamma^{-1}(S) : (\phi\circ\gamma)'(t) \in \inti(C)\}) > 0.\]
Then, we can pick $i \in \N$ and $R > 0$ such that
\[B_{i,R} = \Big\{t\in \gamma^{-1}(S) : \frac{\phi(\gamma(t+r))-\phi(\gamma(t))}{r} \in C\setminus B(0,\tfrac{1}{i}) \ \forall\ r \in \dom\gamma \cap (0,R)\Big\}\]
has positive measure. Further, we can find a compact subset $K \subset B_{i,R}$ of positive measure with $\mathrm{diam}(K) < R$.  In particular, for any $s,t\in K \subset \gamma^{-1}(S)$,
\[\phi(\gamma(t))-\phi(\gamma(s)) \in C\setminus B(0,\tfrac{1}{i} |t-s|).\]
Then $\gamma\llcorner K$ is a $C$-curve intersecting $S$ in a set of positive measure, and a contradiction has been reached. \end{proof}

The previous lemma gives us a method to ``refine" the cones associated to an Alberti representation. One should think of the cones $C_i$ in the following lemma as being very thin.
\begin{lemma}\label{refine}
Let $X$ be a metric space, $\phi\colon X \to \mathbb R^n$ Lipschitz and $C\subset \mathbb R^n$ a cone.
Suppose that a measure $\mu$ has a $C$-Alberti representation.
Then, for any collection of cones $C_1,\ldots,C_m \subset \mathbb R^n$ such that
\[C \setminus \{0\} \subset \bigcup_{i=1}^m \inti(C_{i}),\]
there exists a Borel decomposition $X= A_1\cup\ldots\cup A_m$ such that each $\mu\llcorner A_i$ has a $C_i$-Alberti representation.
\end{lemma}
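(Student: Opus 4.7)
The plan is to apply \Cref{lem:c-null} independently for each of the cones $C_{1},\ldots,C_{m}$, producing Borel decompositions $X = A_{i}' \cup S_{i}$ in which $\mu \llcorner A_{i}'$ admits a $C_{i}$-Alberti representation and $S_{i}$ is $C_{i}$-null with respect to $\phi$. The key measure-theoretic step will be to show that $\mu\bigl(\bigcap_{i=1}^{m} S_{i}\bigr) = 0$, after which a standard disjointification -- for instance $A_{1} := A_{1}' \cup (X \setminus \bigcup_{i} A_{i}')$ and $A_{i} := A_{i}' \setminus \bigcup_{j < i} A_{j}'$ for $i \geq 2$ -- yields the desired Borel partition. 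Because $\mu\llcorner A_{i} \ll \mu\llcorner A_{i}'$, the $C_{i}$-Alberti representation of the latter transfers directly to the former, and so the entire argument reduces to establishing the $\mu$-negligibility of $\bigcap_{i} S_{i}$.

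To handle the crucial step, I would exploit the hypothesised $C$-Alberti representation $\P$ of $\mu$. Fix a $C$-curve $\gamma \in \Gamma(X)$ in the support of $\P$. Since $\phi \circ \gamma$ is Lipschitz on $\dom\gamma$, its derivative $(\phi\circ\gamma)'(t)$ exists at $\mathcal{L}^{1}$-a.e.\ $t \in \dom\gamma$, and by taking the limit of the difference quotients appearing in \Cref{def:curve-direction} -- which lie in the closed cone $C$ and are uniformly bounded away from $0$ -- one sees that $(\phi\circ\gamma)'(t) \in C \setminus \{0\}$ almost everywhere. The covering hypothesis $C \setminus \{0\} \subset \bigcup_{i} \inti(C_{i})$ then partitions $\dom\gamma$, up to an $\mathcal{L}^{1}$-null set, into the sets $E_{i}(\gamma) := \{t \in \dom\gamma : (\phi\circ\gamma)'(t) \in \inti(C_{i})\}$. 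Applying \Cref{lem:cone-null} with $S = S_{i}$ yields $\mathcal{H}^{1}(E_{i}(\gamma) \cap \gamma^{-1}(S_{i})) = 0$, and since $\gamma^{-1}(\bigcap_{j} S_{j}) \subset \gamma^{-1}(S_{i})$ for every $i$, summing over $i$ gives $\mathcal{H}^{1}(\gamma^{-1}(\bigcap_{j} S_{j})) = 0$, equivalently $\gamma_{\ast}\mathcal{L}^{1}(\bigcap_{j} S_{j}) = 0$. Integrating against $\P$ shows $\nu(\P)(\bigcap_{j} S_{j}) = 0$, and the absolute continuity $\mu \ll \nu(\P)$ finishes the step.

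The main obstacle, such as it is, is the derivative assertion appearing in the second paragraph: along a $C$-curve $\gamma$ one must verify that the almost-everywhere derivative of $\phi \circ \gamma$ lies in $C \setminus \{0\}$. This is the only genuinely analytic input; it is a short verification using the closedness of $C$ and the uniform lower bound $\delta > 0$ from \Cref{def:curve-direction}, but it is the point where the proof uses the specific geometric structure of $C$-curves rather than invoking Alberti representations as black boxes. Once this is in hand, the rest of the argument is pure measure-theoretic bookkeeping built on \Cref{lem:c-null} and \Cref{lem:cone-null}.
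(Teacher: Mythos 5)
Your proposal is correct and follows essentially the same route as the paper: apply \Cref{lem:c-null} once for each cone $C_i$ and then use \Cref{lem:cone-null} together with the fact that $(\phi\circ\gamma)'(t)\in C\setminus\{0\}$ a.e.\ along $C$-curves to show the leftover set is $\mu$-null. The only difference is bookkeeping: the paper nests the applications of \Cref{lem:c-null} (restricting to the current singular set at each step) so the decomposition comes out disjoint automatically, whereas you run them in parallel and disjointify at the end.
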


\begin{proof}
By \Cref{lem:c-null} there exists a decomposition $X=A_1\cup S_1$ such that $A_1$ has the required form and $S_1$ is $C_1$-null.
By applying \Cref{lem:c-null} again (to $\mu \llcorner S_{1}$) we obtain a decomposition $X=A_1\cup A_2 \cup S_2$ where $A_2$ has the required form and $S_2$ is both $C_1$-null and $C_2$-null. Repeating, we obtain a decomposition $X=A_1\cup\ldots\cup A_m \cup S$ such that each $A_i$ has the required form and $S$ is $C_i$-null for each $1\leq i \leq m$. Now, if $\gamma$ is a $C$-curve, then 
\begin{align*} \mathcal{H}^{1}(\gamma^{-1}(S)) & = \mathcal{H}^{1}(\{t \in \gamma^{-1}(S) : (\phi \circ \gamma)'(t) \in C \setminus \{0\}\})\\
& \leq \sum_{i = 1}^{m} \mathcal{H}^{1}(\{t \in \gamma^{-1}(S) : (\phi \circ \gamma)'(t) \in \inti(C_{i})\}) = 0 \end{align*}
by \Cref{lem:cone-null}, which implies that $S$ is $C$-null. Since $\mu$ has a $C$-Alberti representation, we have $\mu(S)=0$.
Therefore, $A'_1=A_1\cup S$ also has the required properties of $A_1$, and $X=A'_1\cup A_2\cup \ldots \cup A_m$ is the required decomposition.
\end{proof}

\begin{definition}\label{def:atilde}
For $w\in \S$ and $0<\theta<1$, recall the notation $C(w,\theta) = \{v\in\R^n: |v\cdot w| \geq \theta\|v\|\}$. Note that $C(w,\theta)$ becomes wider as $\theta\to 0$. Let $X$ be a metric space and $\phi\colon X \to \R^n$ Lipschitz. Define the set $\mathcal{S}(\phi)$ to be the collection of Borel $S\subset X$ for which the following is true:
For any $0<\theta<1$ there exists a Borel decomposition
\[S = S_1\cup S_2 \cup \ldots \cup S_m\]
and $w_1,\ldots,w_m \in \mathbb S^{n-1}$ such that $S_i$ is $C(w_i,\theta)$-null for each $1\leq i \leq m$.
\end{definition}

\begin{theorem}\label{alberti-decomposition}
        Let $(X,d,\mu)$ be a metric measure space and $\phi\colon X \to \R^n$ Lipschitz.
        There exists a Borel decomposition
        \[X = S \cup \bigcup_{i\in\N} A_i\]
        such that each $\mu\llcorner A_i$ has $n$ independent Alberti representations and $S\in \mathcal{S}(\phi)$.
\end{theorem}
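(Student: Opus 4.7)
The proof will be an exhaustion argument combined with iterated uses of \Cref{lem:c-null} and \Cref{refine}.

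Let $\mathcal{T}$ denote the class of Borel $A \subset X$ such that $\mu\llcorner A$ admits $n$ independent Alberti representations. Since Alberti representations restrict to Borel subsets, $\mathcal{T}$ is hereditary; combined with $\sigma$-finiteness of $\mu$, a standard exhaustion yields a countable disjoint family $\{A_i\}_{i\in\N} \subset \mathcal{T}$ such that the residual $S := X \setminus \bigcup_i A_i$ contains no positive-measure Borel subset in $\mathcal{T}$. The theorem then reduces to showing $S \in \mathcal{S}(\phi)$.

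Fix $\theta \in (0,1)$ and choose directions $w_1,\ldots,w_M \in \S$ so that the wide cones $C_j := C(w_j,\theta)$ cover $\R^n \setminus \{0\}$. Iteratively applying \Cref{lem:c-null} to $\mu\llcorner S$ with these cones (inheriting each Alberti representation to subsequent residuals) yields a decomposition $S = A \cup T_1 \cup \cdots \cup T_M$ where each $T_j$ is $C_j$-null and $\mu\llcorner A$ admits a $C_j$-Alberti representation for every $j$. It now suffices to show $\mu(A) = 0$; I assume the contrary and aim for a contradiction by extracting $n$ independent Alberti representations on a positive-measure subset of $A$.

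To this end, fix $\theta_0 \in (0,1)$ close to $1$ and, for each $j$, cover $C_j \setminus \{0\}$ by thin cones $\tilde{C}_{j,k} = C(\tilde{w}_{j,k},\theta_0)$. Applying \Cref{refine} iteratively (once per wide representation on $A$, with inheritance of already-refined representations to subsets) partitions $A$ into finitely many pieces, each of which simultaneously carries a thin-cone Alberti representation with some axis $\tilde{w}_{j,k_j} \in C_j$ for every $j$. For $\theta_0$ close enough to $1$, thin cones are independent iff their axes are linearly independent; it then suffices to find $n$ of the axes spanning $\R^n$ on some positive-measure piece, since this produces $n$ independent Alberti representations there, contradicting the defining property of $S$.

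The main obstacle lies in finding the $n$ spanning axes when $\theta$ is small. For small $\theta$ the wide cones $C_j$ overlap substantially, so the refined axes on a given piece can all lie in a proper subspace $V \subsetneq \R^n$. I would address this by an adaptive step: on any such bad piece, apply \Cref{lem:c-null} once more with a thin cone in a direction $v \perp V$; either the positive-measure part yields an additional independent thin-cone representation (after at most $n$ augmentations one reaches the sought $n$-tuple and a contradiction), or the piece is null for that thin cone. A careful initial choice of the net $\{w_j\}$ (dense enough that every direction $v$ arising in the adaptive step lies in some $C(w_j,\theta)$) allows the complementary thin-cone-null pieces to be bundled into enlarged wide-cone-null sets $T_j$, preserving the finiteness of the decomposition of $S$. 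Hence $\mu(A) = 0$, and $S \in \mathcal{S}(\phi)$ for the given $\theta$; since $\theta \in (0,1)$ was arbitrary, $S \in \mathcal{S}(\phi)$ as required.
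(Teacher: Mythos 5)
Your overall strategy differs from the paper's: you set aside the $A_i$ by exhaustion first and then attempt to verify $S \in \mathcal{S}(\phi)$ by contradiction, whereas the paper constructs, for each fixed $\theta$, a decomposition into pieces with $n$ independent representations plus pieces that are $C(w,\theta)$-null, and then intersects the null parts over $\theta = 1/j$. Your exhaustion step is fine, and you correctly identify the main obstacle: after refining the wide-cone representations into thin cones, the thin-cone axes on a given piece may all lie in a proper subspace $V \subsetneq \R^n$. However, the ``adaptive step'' you propose to resolve this has a genuine gap. You apply \Cref{lem:c-null} on a bad piece with a \emph{thin} cone $C(v,\theta_0)$ (with $\theta_0$ close to $1$) in a direction $v \perp V$, and propose that the resulting $C(v,\theta_0)$-null residuals can be bundled into wide-cone-null sets $T_j$ by choosing the initial net $\{w_j\}$ dense enough that $v$ lies in some $C(w_j,\theta)$. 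This cannot work: if $C(v,\theta_0) \subset C(w_j,\theta)$, then every $C(v,\theta_0)$-curve is also a $C(w_j,\theta)$-curve, so being $C(w_j,\theta)$-null is \emph{stronger} than being $C(v,\theta_0)$-null. The implication goes the wrong way. A set that is null only for the small class of thin $C(v,\theta_0)$-curves need not be null for the much larger class of wide $C(w_j,\theta)$-curves, so it cannot be absorbed into a $T_j$, and $S \in \mathcal{S}(\phi)$ does not follow.

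The paper circumvents this by never producing a residual that is only thin-cone-null. At the inductive step from $d$ to $d+1$ representations it first refines the existing $d$ representations (via \Cref{refine}) into $\alpha$-thin cones $C(w_1,\alpha),\ldots,C(w_d,\alpha)$, chooses $w_{d+1}$ perpendicular to $\mathrm{span}(w_1,\ldots,w_d)$, and then applies \Cref{lem:c-null} with the \emph{wide} cone $C(w_{d+1},\theta)$. The parameter $\alpha = \alpha(\theta)$ is chosen in advance so that $C(w_{d+1},\theta) \cap C(w_i,\alpha) = \{0\}$ whenever $w_{d+1} \perp w_i$, which gives independence of the new wide cone from the $d$ thin cones \emph{without} having to refine the new representation itself. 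Thus the residual at every step is $C(w,\theta)$-null at the fixed width $\theta$, exactly as membership in $\mathcal{S}(\phi)$ requires. You could graft that mechanism onto your exhaustion-plus-contradiction framing, but the thin-cone adaptive step as written does not close the argument. A secondary, minor point: the assertion that ``for $\theta_0$ close enough to $1$, thin cones are independent iff their axes are linearly independent'' is not uniform in $\theta_0$ — for any fixed $\theta_0 < 1$ there are linearly independent axes whose $\theta_0$-thin cones are not independent, so $\theta_0$ would have to be tailored to the finite set of axes produced by the cover, which in turn depends on $\theta_0$; the paper's explicit $\alpha(\theta)$ quantifier sidesteps this circularity.
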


\begin{proof}
We first prove the following.
Given any $0<\theta<1$ and $1\leq d \leq n$ there exists a Borel decomposition
\begin{equation}\label{eq:alberti-dec}
X=\bigcup_{i=1}^m A_i \cup \bigcup_{i=1}^m S_i
\end{equation}
such that each $\mu\llcorner A_i$ has $d$ independent Alberti representations and each $S_i$ is $C(w_i,\theta)$-null for some $w_{i} \in \S$. We find these representations iteratively as follows.

        First apply \Cref{lem:c-null} to an arbitrary cone $C = C(w,\theta)$ to obtain a decomposition $X=A\cup S$ where $\mu\llcorner A$ has a $C$-Alberti representation and $S$ is $C$-null.

		Choose $\alpha = \alpha(\theta) < 1$ such that if $w_1,\ldots,w_{n} \in \S$ and $w_n \perp w_i$ for $1 \leq i<n$, then 
		\begin{equation}\label{form1} C(w_n,\theta) \cap C(w_i,\alpha) = \{0\}, \qquad 1 \leq i<n. \end{equation}
        Suppose then that, for some $1\leq d <n$, there exists a decomposition as in \cref{eq:alberti-dec} where each $\mu\llcorner A_i$ has $d$ independent Alberti representations.
	    By applying \Cref{refine} (and increasing $m$) we may suppose that there exists $w_1^i,\ldots,w_d^i\in\S$ such that these representations, for $i$ fixed, are $C(w_j^i,\alpha)$-Alberti representations for each $1 \leq j \leq d$, and that these cones are independent.
        Fix $1\leq i \leq m$ and pick $w_{d+1}^i \in \S \cap \mathrm{span}(w_1^i,\ldots, w_d^i)^{\perp}$.
        Then the cones 
        \begin{displaymath} C(w_1^i,\alpha),\ldots, C(w_d^i,\alpha) \quad \text{ and } \quad C(w_{d+1}^i,\theta) \end{displaymath}
        are independent by \cref{form1}.
        Applying \Cref{lem:c-null} once more gives a decomposition $A_i = A'_i \cup S'$ where $\mu\llcorner A'_i$ has $d+1$ independent Alberti representations and $S'$ is $C(w_{d+1}^i,\theta)$-null.

        Repeating this for each $1\leq d <n$ completes the proof of \cref{eq:alberti-dec}.
        To complete the proof of the \Cref{alberti-decomposition}, we apply \cref{eq:alberti-dec} for each $\theta= 1/j$, $j \in \N$, to obtain a decomposition $X=\hat A_j \cup \hat S_j$ where each $\hat A_j$ is a finite union of sets with $n$ independent Alberti representations, and each $\hat S_j$ has a decomposition $\hat{S}_j=S_j^1\cup\ldots\cup S_{j}^{m_j}$, where each $S_j^i$ is $C(w_j^i,\theta)$-null  for some $w_j^i\in\S$.
        Setting $S=\cap_j \hat S_j$ completes the proof.
\end{proof}

\section{Alberti representations and Hausdorff dimension}\label{sec:alberti-hausdorff}
In this section we show, using Guth's multilinear Kakeya inequality \cite[Theorem 7]{MR3300318} for neighbourhoods of Lipschitz graphs, that any non-trivial measure with $n$ independent Alberti representations has lower Hausdorff dimension at least $n$. Recall that the lower Hausdorff dimension of a Borel measure $\mu$ is 
\begin{displaymath} \Hd \mu := \inf \{\Hd B : B \text{ is Borel and } \mu(B) > 0\}. \end{displaymath}

Fix $n\geq 2$ and for $m\in\N$ let $\calD_{m}$ be the family of dyadic sub-cubes of $[0,1)^{n}$ of side-length $2^{-m}$.
For $Q\in \calD_m$ write $|Q|=2^{-mn}$ for the Lebesgue measure of $Q$.
For $0<\delta <1/4$ and for each $1 \leq j \leq n$ let $\Gamma_{j}(\delta) \subset \Gamma(\R^{n})$ be the set of $C(e_j,1 - \delta)$-curves in $\R^n$ (with respect to $\phi = \mathrm{id}$) and let $\tn_j \in \mathcal P(\Gamma_j(\delta))$. Recall that that the cone $C(e_{j},1 - \delta)$ gets narrower as $\delta \to 0$.
 
 For each $1\leq j\leq n$, cover $\Gamma_j(\delta)$ by a countable collection $\mathcal{R}_j$ of disjoint non-empty subsets of diameter at most $2^{-m}$. For each $1\leq j\leq n$ and every $R \in \mathcal{R}_j$, pick one curve fragment $\gamma \in R$, extend it to a $C(e_i,1 - \delta)$-curve $\gamma_R$ homeomorphic to $\R$, and let
 \begin{align*} f_{j}  & := 2^{m(n-1)}\sum_{R \in \mathcal{R}_j} \tn_{j}(R)\chi_{B(\im \gamma_{R},2\sqrt{n}2^{-m})}. \end{align*}
 
 We will use the notation $\lesssim_{\epsilon}$ for an inequality that holds up to a constant that depends on $n$ and $\epsilon$.
 The following is a direct consequence \cite[Theorem 7]{MR3300318}.
 \begin{theorem}\label{thm:guth}
 	For every $\epsilon>0$ there exists a $\delta>0$ such that
 \begin{displaymath} \int_{[0,1]^{n}} \prod_{j = 1}^{n} [f_{j}(x)]^{\frac{1}{n - 1}} \, dx \lesssim_{\epsilon} 2^{m\epsilon}. \end{displaymath}
 \end{theorem}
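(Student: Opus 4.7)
The plan is to deduce the theorem directly from Guth's multilinear Kakeya inequality for neighbourhoods of Lipschitz graphs \cite[Theorem~7]{MR3300318}: each set $T_{R}^{(j)} := B(\im \gamma_{R}, 2\sqrt{n} \cdot 2^{-m})$ will play the role of an $O(2^{-m})$-neighbourhood of a Lipschitz graph in direction $e_{j}$, and each weight $\tn_{j}(R)$ the role of a mass assigned to that tube.

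First, because $\gamma_{R}$ is a $C(e_{j},1-\delta)$-curve in $\R^{n}$, the chord $\gamma_{R}(t) - \gamma_{R}(s)$ has $e_{j}$-component at least $(1 - \delta)\|\gamma_{R}(t) - \gamma_{R}(s)\|$ for every $s \neq t$. Reparametrising by the $e_{j}$-coordinate therefore exhibits $\im \gamma_{R}$ as the graph of a function $\R \to e_{j}^{\perp}$ whose Lipschitz constant $L(\delta)$ tends to $0$ as $\delta \to 0^{+}$. Consequently, any $n$-tuple of unit tangent vectors chosen one from each family $j = 1,\dots,n$ lies within $O(L(\delta))$ of the standard basis $\{e_{1},\dots,e_{n}\}$, so the absolute value of their wedge product is at least $\tfrac{1}{2}$ once $\delta$ is small enough. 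This is precisely the quantitative transversality hypothesis that Guth's theorem requires, and it dictates how small $\delta$ must be chosen in terms of $\epsilon$.

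Second, I would invoke \cite[Theorem~7]{MR3300318} in its weighted form (obtained from the unweighted statement by integer approximation, or directly by inspecting Guth's induction-on-scale proof). After dilating $[0,1]^{n}$ to $[0,2^{m}]^{n}$ the tubes $T_{R}^{(j)}$ become $O(1)$-neighbourhoods of Lipschitz graphs of length $\sim 2^{m}$, and rescaling Guth's inequality back to $[0,1]^{n}$ produces
\begin{displaymath} \int_{[0,1]^{n}} \prod_{j=1}^{n} \left( \sum_{R \in \mathcal{R}_{j}} \tn_{j}(R) \, \chi_{T_{R}^{(j)}}(x) \right)^{\frac{1}{n-1}} dx \lesssim_{\epsilon} 2^{-mn + m\epsilon} \prod_{j=1}^{n} \left( \sum_{R \in \mathcal{R}_{j}} \tn_{j}(R) \right)^{\frac{1}{n-1}}. \end{displaymath}
Since $\mathcal{R}_{j}$ is a disjoint cover of $\Gamma_{j}(\delta)$ and $\tn_{j}$ is a probability measure on it, each inner sum on the right equals $1$. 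Multiplying both sides by $(2^{m(n-1)})^{n/(n-1)} = 2^{mn}$ absorbs the prefactor $2^{m(n-1)}$ from the definition of each $f_{j}$ and delivers the claimed bound.

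The main obstacle, to my mind, is confirming that the implied constant in Guth's theorem depends only on $\epsilon$ and $n$ (through $\delta$ and the transversality parameter) and is independent of the scale $2^{-m}$ and of the particular extensions $\gamma_{R}$ chosen. This reduces to inspecting Guth's proof: his induction on scale halts at the radius of the tubes, so the factor $R^{\epsilon}$ in his statement becomes $2^{m\epsilon}$, and the remaining dependence is on transversality and on the Lipschitz graph constants, both controlled by $\delta = \delta(\epsilon,n)$.
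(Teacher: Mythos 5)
Your proposal is correct and is essentially the same proof as the paper's: rescale $[0,1]^{n}$ by $\sim 2^{m}$ so that the tubes become $O(1)$-neighbourhoods of Lipschitz graphs, apply the weighted version of Guth's Theorem~7, and use that each $\sum_{R \in \mathcal{R}_j} \tn_j(R) = 1$ since $\P_j$ is a probability measure. The only cosmetic difference is that the paper obtains the weighted version by citing \cite[Corollary~5]{MR3300318} rather than re-deriving it, and dilates by $\lambda_m = 2^m/(2\sqrt{n})$ to get exactly unit-radius tubes rather than $O(1)$-radius tubes.
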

 \begin{proof}
  Guth formulates his result in terms of unit neighbourhoods of Lipschitz graphs, so to make his result applicable we first need to precompose each $f_{j}$ with a dilation by $\lambda_m:= 2^m/2\sqrt{n}$:
 \begin{align}\label{form14} \int_{[0,1]^{n}} \prod_{j = 1}^{n} [f_{j}(x)]^{\frac{1}{n - 1}} \, dx &= \lambda_m^{-n} \int_{[0,\lambda_m]^{n}} \prod_{j = 1}^{n} \left[ f_{j}\left(\frac{x}{\lambda_m} \right) \right]^{\frac{1}{n - 1}} \, dx\notag\\
 &= \int_{[0,\lambda_m]^{n}} \prod_{j = 1}^{n} \left[ f^{m}_{j}(x) \right]^{\frac{1}{n - 1}} \, dx.\end{align} 
 Here 
 \begin{displaymath} f^{m}_{j}(x) = \lambda_m^{1-n} \cdot f_{j}\left(\frac{x}{\lambda_m} \right) = C_n\sum_{R \in \mathcal{R}_j} \tn_{j}(R)\chi_{B(\im \lambda_m\gamma_{R},1)}(x), \end{displaymath} 
 where $\lambda_m\gamma_{R}$ is the $\lambda_m$-dilation of $\gamma_{R}$ (which is still a $C(e_j,1-\delta)$-curve), and $C_n$ is a constant depending on $n$. Thus, applying the weighted version of \cite[Theorem 7]{MR3300318} (as in \cite[Corollary 5]{MR3300318}) to the right hand side of \eqref{form14} gives
 \begin{displaymath} \int_{[0,1]^{n}} \prod_{j = 1}^{n} [f_{j}(x)]^{\frac{1}{n - 1}} \, dx \lesssim_{\epsilon} 2^{m\epsilon} \prod_{j = 1}^{n} \left(\sum_{R \in \mathcal{R}_j} \tn_{j}(R) \right)^{\frac{1}{n - 1}} \leq 2^{m\epsilon}, \end{displaymath}
 recalling that the sets $R \in \mathcal{R}_j$ are disjoint for each $1\leq j \leq n$.
 \end{proof}
 
 \begin{lemma}\label{prop:apply-guth}
 	For every $\epsilon>0$ there exists a $\delta>0$ such that the following is true.
 	Suppose that $\mu$ is a measure on $\R^{n}$ for which there exist $M_1,\ldots,M_n>0$ such that $\mu \leq M_{j}\nu(\tn_{j})$ for all $1 \leq j \leq n$, where $\P_{j} \in \mathcal{P}(\Gamma_{j}(\delta))$. Then
\begin{equation}\label{mainMult} \sum_{Q \in \calD_{m}} \left(\frac{\mu(Q)}{|Q|} \right)^{\frac{n}{n - 1}}|Q| \lesssim_{\epsilon} 2^{m\epsilon} \prod_{j = 1}^{n}M_j^{\frac{1}{n - 1}}, \qquad m \geq 0. \end{equation}
\end{lemma}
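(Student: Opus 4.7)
The plan is to dominate $\mu(Q)$ cube-by-cube on $\calD_m$ by an integral of $\prod_j f_j^{1/(n-1)}$ over $Q$, then sum over $Q$ and reduce to \Cref{thm:guth}. Given $\epsilon>0$, I would select $\delta>0$ as produced by \Cref{thm:guth}, and retain the notation $\P_j$, $\mathcal{R}_j$, $\gamma_R$, $f_j$ from the preamble to that theorem.

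For each $Q \in \calD_m$ and each $1 \leq j \leq n$, introduce the bookkeeping quantity
\[ F_{j,Q} := \sum_{R \in \mathcal{R}_j \,:\, R \text{ hits } Q} \P_j(R), \]
where $R$ is said to \emph{hit} $Q$ if $\im \gamma \cap Q \neq \emptyset$ for some $\gamma \in R$. The argument hinges on two complementary estimates: (i) $\mu(Q) \leq 2\sqrt{n}\, M_j\, 2^{-m}\, F_{j,Q}$ for every $j$, and (ii) $\inf_{x \in Q} f_j(x) \geq 2^{m(n-1)}\, F_{j,Q}$ for every $j$. Estimate (i) is a consequence of $\mu \leq M_j \nu(\P_j)$ together with the fact that every $\gamma \in \Gamma_j(\delta)$ is $2$-biLipschitz: if $\im \gamma$ meets $Q$ then $\gamma_{\ast}\mathcal{L}^{1}(Q) \leq 2\,\mathrm{diam}(Q) \leq 2\sqrt{n}\, 2^{-m}$, while $\gamma$ not meeting $Q$ contributes nothing to $\nu(\P_j)(Q)$. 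Estimate (ii) uses $\mathrm{diam}(R) \leq 2^{-m}$ in the $d_\Gamma$-metric: if $\gamma \in R$ hits $Q$ at a point $y \in Q$, then $\dist(y, \im \gamma_R) \leq 2^{-m}$, and combining this with $\mathrm{diam}(Q) = \sqrt{n}\, 2^{-m}$ forces $Q \subset B(\im \gamma_R, 2\sqrt{n}\, 2^{-m})$; summing the corresponding terms in the definition of $f_j$ yields (ii).

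Multiplying (i) over $j=1,\ldots,n$ gives $\mu(Q)^n \leq (2\sqrt{n})^n\, |Q|\, \prod_j M_j F_{j,Q}$, and since $|Q|=2^{-mn}$ this rearranges to
\[ \left(\frac{\mu(Q)}{|Q|}\right)^{\!\tfrac{n}{n-1}}\!|Q| \leq (2\sqrt{n})^{\tfrac{n}{n-1}} \prod_{j=1}^n M_j^{\tfrac{1}{n-1}} \prod_{j=1}^n F_{j,Q}^{\tfrac{1}{n-1}}. \]
From (ii) one obtains $\prod_j f_j(x)^{1/(n-1)} \geq 2^{mn} \prod_j F_{j,Q}^{1/(n-1)}$ for every $x \in Q$, so integrating over $Q$ and using $|Q|=2^{-mn}$ yields $\prod_j F_{j,Q}^{1/(n-1)} \leq \int_Q \prod_j f_j^{1/(n-1)}$. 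Substituting this bound and summing over the partition $\calD_m$ of $[0,1)^n$ produces
\[ \sum_{Q \in \calD_m} \left(\frac{\mu(Q)}{|Q|}\right)^{\!\tfrac{n}{n-1}}\!|Q| \leq (2\sqrt{n})^{\tfrac{n}{n-1}} \prod_{j=1}^n M_j^{\tfrac{1}{n-1}} \int_{[0,1)^n} \prod_{j=1}^n f_j^{\tfrac{1}{n-1}}, \]
and the remaining integral is $\lesssim_\epsilon 2^{m\epsilon}$ by \Cref{thm:guth}.

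The step I expect to require the most care is verifying (ii) with the constants claimed: one must track that $d_\Gamma$-closeness (Hausdorff distance between graphs in $I\times X$) controls Hausdorff distance between images, and that the enlargement radius $2\sqrt{n}\,2^{-m}$ in the definition of $f_j$ is exactly large enough to absorb both $\mathrm{diam}(Q)$ and the slack between $\im \gamma$ and $\im \gamma_R$ for any $\gamma$ in the same piece $R \in \mathcal{R}_j$. Once (i) and (ii) are cleanly established, everything else is bookkeeping and an appeal to \Cref{thm:guth}.
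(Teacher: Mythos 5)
Your proposal is correct and takes essentially the same route as the paper's proof: both establish, for each $j$ and each $Q\in\calD_m$, the pointwise estimate $\mu(Q)\lesssim M_j\,|Q|\,f_j(x)$ for $x\in Q$ (you route this through the bookkeeping quantity $F_{j,Q}$, while the paper uses $\tn_j(\{\gamma : \im\gamma\cap Q\neq\emptyset\})$, which is the same thing up to replacing the cells $R$ by individual curves), then take a geometric mean over $j$, integrate over $Q$, sum over $\calD_m$, and invoke \Cref{thm:guth}. The re-ordering of steps and the explicit constant $(2\sqrt{n})^{n/(n-1)}$ are cosmetic.
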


\begin{proof}
Fix $1\leq j\leq n$, $m \geq 0$, and $Q\in \calD_m$.
For any $\gamma\in R \in \mathcal R_j$ such that $\gamma\cap Q\neq \emptyset$,
 \begin{equation*}
 	Q \subset B(\im \gamma, \sqrt{n} 2^{-m}) \subset B(\im \gamma_R, 2\sqrt{n}2^{-m}).
 \end{equation*}
 Consequently, for any $x\in Q$,
 \begin{align*}\label{form44} f_{j}(x) & \geq 2^{m(n-1)} \sum \{\tn_{j}(R) : R \in \mathcal R_j, \exists \, \gamma \in R \text{ with } \im \gamma \cap Q \neq \emptyset\}\notag\\
 &\geq |Q|^{-\frac{n-1}{n}} \cdot \tn_{j}(\{\gamma \in \Gamma_{j} : \im \gamma \cap Q \neq \emptyset\}). \end{align*}
Further, $\gamma_{*}\L^{1}(Q) \lesssim |Q|^{\frac{1}{n}}$ for all $\gamma \in \Gamma_{j}(\delta) \subset \Gamma(\R^{n})$, which consists of $2$-biLipschitz curves, and so, by our assumption on $\mu$,
\begin{equation*}\label{form5}  \mu(Q) \lesssim  M_j \cdot |Q|^{\frac{1}{n}}\cdot \tn_{j}(\{\gamma\in \Gamma_i : \gamma \cap Q \neq \emptyset\}) \leq M_j |Q| f_j(x). \end{equation*}
Dividing by $|Q|$, taking a geometric average over $1\leq j\leq n$, and integrating the result over $Q$ gives
\begin{align*}\label{form11} \left(\frac{\mu(Q)}{|Q|} \right)^{\frac{n}{n - 1}}|Q|
&\lesssim \int_Q \prod_{j = 1}^{n} [M_j f_{j}(x)]^{\frac{1}{n - 1}} \, \d x.
\end{align*}
Summing over $Q\in\calD_m$ and applying \Cref{thm:guth} completes the proof.
\end{proof}

\begin{theorem}
	\label{thm:alberti-dimension}
	Let $(X,d,\mu)$ be a metric measure space with $n$ independent Alberti representations.  Then $\Hd \mu \geq n$.
\end{theorem}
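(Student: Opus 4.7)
The plan is to reduce the problem to the Euclidean setting of \Cref{prop:apply-guth} via a linear coordinate change and pushforward, and then to convert the resulting Kakeya-type inequality into a Hausdorff dimension bound through a multi-scale dyadic covering argument.

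Fix a Borel $B \subset X$ with $\mu(B) > 0$ and some $s < n$; the goal is to show $\mathcal{H}^s(B) > 0$. Let $\phi \colon X \to \R^n$ and the independent cones $C_1, \ldots, C_n$ witness the $n$ independent Alberti representations $\P_1, \ldots, \P_n$. For any prescribed small $\delta > 0$, \Cref{refine} lets us refine each $C_i$: covering $C_i \cap \S$ by finitely many narrower cones $C(w, 1-\eta)$ with $w \in C_i \cap \S$ and intersecting the resulting pieces, we obtain a finite Borel decomposition of $X$ on each piece of which the $i$-th representation is $C(w_i, 1-\eta)$-Alberti for some $w_i \in C_i \cap \S$. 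Since the set of admissible tuples $(w_1,\ldots,w_n)$ lies in the compact set $(C_1 \cap \S) \times \cdots \times (C_n \cap \S)$ and the matrix $[w_1|\cdots|w_n]$ is invertible by independence of the $C_i$, compactness yields a uniform upper bound on the norm of the inverse of the linear map $T$ with $T(w_i) = e_i$. Taking $\eta$ small enough ensures $T(C(w_i, 1-\eta)) \subset C(e_i, 1-\delta)$ uniformly across pieces. A further countable decomposition bounding the Radon--Nikodym derivatives $d\mu/d\nu(\P_i)$ then lets us pass to a piece $X'$ with $\mu(B \cap X') > 0$ on which $\mu \llcorner X' \leq M_i \nu(\P_i)$ for each $i$.

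Set $\psi := T \circ \phi$ and $\nu := \psi_{\ast}(\mu \llcorner X')$ on $\R^n$. Composing any $C(w_i, 1-\eta)$-curve $\gamma$ with $\psi$ and reparametrizing by the $e_i$-coordinate of $\psi \circ \gamma$ produces a $C(e_i, 1-\delta)$-curve in $\R^n$ that is $1/(1-\delta)$-bilipschitz on its domain; assuming $\delta < \tfrac{1}{2}$ and after partitioning the parameter into pieces of length $\leq 1$, this yields probability measures $\tilde \P_i \in \mathcal{P}(\Gamma_i(\delta))$ and constants $M_i'$ with $\nu \leq M_i' \nu(\tilde \P_i)$. \Cref{prop:apply-guth} applied to $\nu$ with some small $\epsilon' > 0$, extended from $[0,1)^n$ to all of $\R^n$ by summing over a unit-cube cover of $\spt \nu$, then yields
\[
\sum_{Q \in \calD_m} \nu(Q)^{n/(n-1)} \lesssim_{\epsilon'} 2^{m(\epsilon' - n/(n-1))}, \qquad m \geq 0.
\]

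To extract the dimension bound, take any cover of $\psi(B \cap X')$ by dyadic cubes $\{Q_j\}$ with $\sum_j |Q_j|^{s/n} < \delta_0$ and group by scale: setting $\delta_{m'} := \sum_{Q_j \in \calD_{m'}} |Q_j|^{s/n}$ gives $\sum_{m'} \delta_{m'} = \delta_0$ and $|\{j : Q_j \in \calD_{m'}\}| = \delta_{m'} 2^{m's}$. H\"older at scale $m'$ with exponents $(n/(n-1), n)$, combined with the Kakeya estimate, yields
\[
\sum_{Q_j \in \calD_{m'}} \nu(Q_j) \lesssim 2^{m'((s - n)/n + \epsilon'(n-1)/n)} \delta_{m'}^{1/n}.
\]
Choosing $\epsilon' < (n-s)/(n-1)$ makes the exponent some fixed $-\tau < 0$, and a second H\"older application across scales with exponents $(n, n/(n-1))$ against the convergent geometric series $\sum 2^{-m'\tau n/(n-1)}$ gives $\sum_j \nu(Q_j) \lesssim \delta_0^{1/n}$. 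Since $\sum_j \nu(Q_j) \geq \nu(\psi(B \cap X')) \geq \mu(B \cap X') > 0$, this forces $\delta_0 \gtrsim \mu(B \cap X')^n$, so $\mathcal{H}^s(\psi(B)) > 0$. As $\psi$ is Lipschitz and $s < n$ is arbitrary, $\Hd B \geq \Hd \psi(B) \geq n$. The main obstacle is the pushforward step---turning the pushed Alberti representations into genuine elements of $\mathcal{P}(\Gamma_i(\delta))$ with uniformly controlled constants---which is precisely where the uniform bound on $T$, the narrowness parameter $\eta$, and the curve reparametrization must mesh, with the chain $s \mapsto \epsilon' \mapsto \delta \mapsto \eta$ dictating the order in which constants are chosen.
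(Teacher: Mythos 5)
Your overall strategy is the same as the paper's: reduce to $\R^n$ via a pushforward and a linear change of coordinates, refine the cones, apply \Cref{prop:apply-guth}, then convert the resulting Kakeya-type estimate into a Hausdorff dimension bound. The argument is correct. However, two intermediate steps differ from the paper's in a genuine way, and both are worth pointing out.

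First, for the pushforward step, you explicitly construct the measures $\tilde\P_i$ on $\Gamma_i(\delta)$ by composing each $C(w_i,1-\eta)$-curve with $\psi=T\circ\phi$, reparametrizing by the $e_i$-coordinate of $\psi\circ\gamma$ to restore the $2$-bilipschitz condition, and then chopping the domain into sub-intervals of length $\leq 1$ to land inside $\Gamma(\R^n)$. This works, but it requires tracking how the pushforward and the Jacobian of the reparametrization interact with the inequalities $\mu\llcorner X' \leq M_i\nu(\P_i)$, and you pass over those bookkeeping steps fairly quickly. The paper avoids all of this by a cleaner route: rather than pushing the Alberti representations forward, it shows that any $C_j$-null set $S\subset\R^n$ (with respect to $\mathrm{id}$) pulls back to a $C_j$-null set $\phi^{-1}(S)\subset X$ (with respect to $\phi$), hence has zero $\phi_\ast\mu$-measure; invoking \Cref{lem:c-null} then yields directly that $\phi_\ast\mu$ has a $C_j$-Alberti representation. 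This bypasses reparametrization and Jacobian issues entirely, so you may want to note that the explicit-construction route is doable but takes more care than your sketch suggests.

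Second, and more interestingly, your conversion of \eqref{mainMult} into the Hausdorff dimension lower bound is a genuinely different, and more self-contained, argument than the paper's. The paper computes the $L^{\frac{n}{n-1}}$ dimension of $\mu\llcorner[0,1)^n$ and then cites \cite[Theorem 1.4]{MR1897575} to pass from $L^q$ dimension to lower Hausdorff dimension. You instead run a direct two-scale H\"older argument against an arbitrary dyadic cover of $\psi(B\cap X')$: one H\"older within each scale to exploit the $\ell^{n/(n-1)}$ bound, one H\"older across scales to sum the resulting geometric series, producing a lower bound on the Hausdorff content $\mathcal{H}^s_\infty(\psi(B\cap X'))$ for every $s<n$. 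This avoids appealing to external machinery about $L^q$ dimensions and is elementary and explicit. The only price is the slight additional care needed to pass from dyadic covers to arbitrary covers (standard, and fine up to dimensional constants) and to ensure that you work on a bounded piece of $\spt\nu$ so the summation of \eqref{mainMult} over translates of $[0,1)^n$ is finite (the paper addresses this point by translating so that $T_\ast(\mu\llcorner B)$ charges $[0,1)^n$; you should say the same).

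One small slip in the final line: you write $\Hd B \geq \Hd\psi(B) \geq n$, but of course $\psi(B)\subset\R^n$ forces $\Hd\psi(B)\leq n$. The correct conclusion is that for each $s<n$ one gets $\Hd\psi(B\cap X'_s)\geq s$ for a suitable Borel $X'_s\subset X$ (depending on $s$ through the chain $s\mapsto\epsilon'\mapsto\delta\mapsto\eta$), whence $\Hd B\geq\Hd(B\cap X'_s)\geq s$ since $\psi$ is Lipschitz, and letting $s\nearrow n$ gives $\Hd B\geq n$.
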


\begin{proof}
	Suppose that $\phi \colon X \to \R^n$ is Lipschitz and $C_1,\ldots,C_n \subset \R^{n}$ are independent cones such that $\mu$ has a $C_j$-Alberti representation with respect to $\phi$ for each $1\leq j \leq n$.
	Then $\phi_{\ast}\mu$ also has a $C_j$-Alberti representation for each $1\leq j \leq n$. To see this, it suffices by \Cref{lem:c-null} to argue that if $S \subset \R^{n}$ is $C_{j}$-null (with respect to $\mathrm{id}$), then $\phi_{\ast}\mu(S) = 0$. So, fix $1 \leq j \leq n$ and a set $S \subset \R^{n}$ which is $C_{j}$-null (with respect to $\mathrm{id}$). We will show that $\phi^{-1}(S)$ is $C_{j}$-null (with respect to $\phi$): then $\phi_{\ast}\mu(S) = \mu(\phi^{-1}(S)) = 0$, since $\mu$ has a $C_{j}$-Alberti representation. Let $\gamma\in\Gamma(X)$ be a $C_j$-curve (with respect to $\phi$). Then it follows from \Cref{def:curve-direction} that $\phi \colon \im \gamma \to \R^{n}$ is biLipschitz, and hence
	\[\phi_*(\H^1\llcorner \gamma) \ll \H^1\llcorner \phi(\gamma).\]
	Also, $\phi \circ \gamma \colon \dom \gamma \to \R^{n}$ is a $C_{j}$-curve (with respect to $\mathrm{id}$), so $\mathcal{H}^{1}(S \cap \phi(\gamma)) = 0$, and consequently $\mathcal{H}^{1}(\gamma \cap \phi^{-1}(S)) = \phi_*(\H^1\llcorner \gamma)(S)=0$. This means that $\phi^{-1}(S)$ is $C_{j}$-null (with respect to $\phi$), as desired.
	
	The previous discussion shows that it suffices to prove the result for $X=\R^n$, because $\phi_{\ast}\mu$ is a measure on $\R^{n}$ satisfying the same hypotheses as $\mu$. So, suppose then that $\mu$ is a measure on $\R^{n}$ which has Alberti representations with respect to independent cones $C(w_i,\theta_i)$ for $1\leq i \leq n$.
	Then there exists a $\tau>0$ such that the cones $C(w_i,\theta_i+\tau)$ are also independent.
	Further, there exists an $L\geq 1$, depending on the $w_{i}$ and $\theta$, such that, for any choice of $v_i \in \mathbb S^{n-1}\cap C(w_i,\theta_i+\tau)$ for each $1\leq i \leq n$, there exists a linear mapping $T \colon \R^n \to \R^n$ of norm at most $L$ that maps each $v_i$ to $e_i$.
	
	Fix $\epsilon>0$ and let $\delta>0$ be given by \Cref{prop:apply-guth}.
	For any Borel $Y\subset \R^n$ with $\mu(Y)>0$, we will show that $\Hd Y \geq n-\epsilon(n-1)$.
	Since $\epsilon>0$ is arbitrary, this will complete the proof.
	By reducing $\delta$ if necessary, we may suppose that $\delta<\tau$.
	By applying \Cref{refine}, we may find a Borel set $A\subset Y$ with $\mu(A)>0$ and cones $C(v_i,\tau/L)\subset C(w_i,\theta_i+\delta)$ such that $\mu\llcorner A$ has a $C(v_i,\tau/L)$-Alberti representation for each $1\leq i \leq n$.
	Then $T_*(\mu\llcorner A)$ has a $C(e_i,\tau)$-Alberti representation for each $1\leq i \leq n$. By the Radon-Nikodym theorem, there exists $B \subset A$ of positive measure for which $T_*(\mu\llcorner B)$ satisfies the hypotheses of \Cref{prop:apply-guth} for some $M_{1},\ldots,M_{n} \leq M < \infty$. By translating, we may also assume that $T_{\ast}(\mu\llcorner B)$ charges $[0,1)^{n}$. Applying the lemma, it then remains to prove that for any measure $\mu$ on $\R^{n}$ satisfying \cref{mainMult}, and any Borel $Y\subset [0,1)^{n}$ with $\mu(Y)>0$, $\Hd Y \geq n-\epsilon(n-1)$.
	
	This can be seen by estimating the $L^{\frac{n}{n-1}}$ dimension of $\mu\llcorner [0,1)^{n}$.
	By definition this quantity is
	\[d :=(n-1)\liminf_{m\to\infty}\frac{\log\sum_{Q\in\calD_{m}} \mu(Q)^{\frac{n}{n-1}}}{\log 2^{-m}}.\]
	For any $Q\in \calD_{m}$, $|Q|/|Q|^{\frac{n}{n-1}} = 2^{\frac{mn}{n-1}}$ and so \cref{mainMult} gives
	\begin{equation*}
		\frac{\log\sum_{Q\in\calD_{m}} \mu(Q)^{\frac{n}{n-1}}}{\log 2^{-m}} \geq \frac{\log M^{\frac{n}{n-1}} C_{n,\epsilon}2^{-m(\frac{n}{n-1}-\epsilon)}}{\log 2^{-m}} \to \frac{n}{n-1} -\epsilon
	\end{equation*}
	as $m\to \infty$, so that $d\geq n-\epsilon(n-1)$.
	Since $\mu\llcorner[0,1)^{n}$ is finite and non-zero, the first inequality of \cite[Theorem 1.4]{MR1897575} states that the $L^{\frac{n}{n-1}}$ dimension of $\mu$ is at most the lower Hausdorff dimension of $\mu$. Hence $\Hd \mu\llcorner [0,1)^{n} \geq d \geq n - \epsilon(n - 1)$, and the proof is complete. \end{proof}

\section{Conclusion}\label{conclusion}

Combining our previous results gives the following.
\begin{theorem}
	\label{thm:main}
	Let $(X,d,\mu)$ be a metric measure space with $\Hd X<\infty$.
	Suppose that $\mu(S \cap \ind(\phi))=0$ for every $n\in \N$, every Lipschitz $\phi\colon X \to \R^n$ and every $S\in \mathcal{S}(\phi)$.
	Then $(X,d,\mu)$ is a Lipschitz differentiability space.
	Further, one can take $n_{i} \leq \Hd X$ for all $i \in \N$ in \Cref{lipDiffSpace}.
\end{theorem}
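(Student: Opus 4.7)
The plan is to combine the three principal ingredients already developed in the paper. First, I would invoke \Cref{prop:chart-decomp} as the final packaging step: let $N$ denote the smallest integer strictly greater than $\Hd X$, so that $N-1 \leq \Hd X$ in both the integer and non-integer cases. Then \Cref{prop:chart-decomp} reduces the claim to showing that $\mu(\ind \phi) = 0$ for every Lipschitz $\phi \colon X \to \R^N$, while simultaneously delivering the bound $n_i \leq N - 1 \leq \Hd X$.

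So I would fix such a Lipschitz map $\phi$ and apply \Cref{alberti-decomposition} to obtain a Borel decomposition $X = S \cup \bigcup_{i \in \N} A_i$ in which $S \in \mathcal{S}(\phi)$ and each $\mu \llcorner A_i$ admits $N$ independent Alberti representations. The hypothesis of the theorem immediately gives $\mu(S \cap \ind \phi) = 0$, so the remaining task is to verify that $\mu(A_i \cap \ind \phi) = 0$ for every $i \in \N$.

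For this I would use that the property of admitting an Alberti representation passes to restrictions: if $\P_1,\ldots,\P_N$ are the independent probability measures witnessing the Alberti representations of $\mu \llcorner A_i$, so that $\mu \llcorner A_i \ll \nu(\P_j)$ for each $j$, then the further restriction $\mu \llcorner (A_i \cap \ind \phi)$ also satisfies $\mu \llcorner (A_i \cap \ind \phi) \ll \nu(\P_j)$, and hence inherits the same $N$ independent Alberti representations. Arguing by contradiction, suppose $\mu(A_i \cap \ind \phi) > 0$ for some $i$. Then \Cref{thm:alberti-dimension} applied to $\mu \llcorner (A_i \cap \ind \phi)$ would give lower Hausdorff dimension at least $N$, and by the very definition of $\Hd(\cdot)$ for measures this forces $\Hd(A_i \cap \ind \phi) \geq N > \Hd X$, which is impossible.

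I do not anticipate any genuine obstacle here, as the substantive work -- producing $n$ independent Alberti representations on the non-singular part of $X$ and extracting Hausdorff dimension from them -- has already been carried out in \Cref{alberti-decomposition} and \Cref{thm:alberti-dimension} respectively. The proof is essentially an assembly of existing pieces; the only small book-keeping point is the choice of $N$ ensuring $N - 1 \leq \Hd X$, which is automatic from taking $N$ to be the smallest integer strictly exceeding $\Hd X$.
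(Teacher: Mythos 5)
Your proof is correct and follows essentially the same route as the paper: reduce via \Cref{prop:chart-decomp} to showing $\mu(\ind\phi)=0$ for $\phi$ mapping into $\R^N$ with $N > \Hd X$, decompose via \Cref{alberti-decomposition}, kill the singular part by hypothesis, and invoke \Cref{thm:alberti-dimension} for the remaining pieces. The only cosmetic difference is that the paper applies the decomposition directly to $\mu\llcorner\ind\phi$ so that each $A_i$ lies inside $\ind\phi$ from the start, whereas you decompose $X$ and then restrict each $A_i$ to $A_i\cap\ind\phi$, justifying along the way that Alberti representations pass to restrictions; both are sound.
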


\begin{proof}
	By \Cref{prop:chart-decomp} it suffices to show that, for any $n > \dim_{\mathrm{H}} X$ and any $\phi\colon X \to\R^n$, $\mu(\ind\phi)=0$.
	To prove the contrapositive, suppose that $\phi\colon X \to \R^n$ is Lipschitz with $\mu(\ind\phi)>0$.
	By \Cref{alberti-decomposition}, there exists a decomposition
	$\ind\phi = S \cup \bigcup_{j = 1}^{\infty} A_{i}$
	where $S\in \mathcal{S}(\phi)$ and each $\mu\llcorner A_i$ has $n$ independent Alberti representations.
	By assumption $\mu(S)=0$ and so $\mu(A_i)>0$ for some $i \in \N$.
	\Cref{thm:alberti-dimension} implies that $\Hd X \geq \Hd A_i \geq n$, as required.
\end{proof}

We say that a Borel $\rho\colon X \to \R$ is a \emph{*-upper gradient}\footnote{
	*-upper gradients were introduced in \cite{MR3818093} using a different, but equivalent, definition.}
of a Lipschitz $f\colon X \to \R$ if, for every 1-Lipschitz $\gamma \in\Gamma(X)$,
\begin{equation*}\label{starug}
|(f\circ\gamma)'(t)| \leq \rho(\gamma(t)) \quad \L^{1}\text{-a.e.\ } t\in\dom\gamma.
\end{equation*}
If $\gamma\colon [0,l]\to X$ is 1-Lipschitz, by applying \cite[Lemma 4]{kirchheim-regularity}, we may cover almost all of $[0,l]$ by countably many compact sets $K_i$ such that each $\gamma \llcorner K_i$ is 1-Lipschitz and 2-biLipschitz, and hence belongs to $\Gamma(X)$.
If $\rho$ is a *-upper gradient of $f$ then
\[|(f\circ\gamma)'(t)|=|(f\circ\gamma \llcorner K_i)'(t)| \leq \rho(\gamma(t)) \quad \L^1 \text{-a.e.\ } t\in K_i,\ \forall i\in\N.\]
Therefore, by the fundamental theorem of calculus, any *-upper gradient is an upper gradient.
However, because *-upper gradients only consider curve fragments, the converse is not true in general (consider $X\subset \R$ a Cantor set of positive Lebesgue measure).
Thus the following theorem implies \Cref{main}.
\begin{theorem}\label{main-star} Let $(X,d,\mu)$ be a metric measure space with $\Hd X <\infty$.
	Suppose that there exists a $C\geq 1$ such that, for every Lipschitz $f\colon X \to \R$ and every *-upper gradient $\rho$ of $f$,
	\begin{equation}\label{Lipug-star}\Lip(f,x) \leq C\rho(x) \quad \mu \text{-a.e.\ } x\in X.\end{equation}
	Then $(X,d,\mu)$ is a Lipschitz differentiability space. Further, one can take $n_{i} \leq \Hd X$ for all $i \in \N$ in \Cref{lipDiffSpace}. \end{theorem}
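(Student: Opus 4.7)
My plan is to invoke \Cref{thm:main}: it suffices to show that $\mu(S \cap \ind \phi) = 0$ for every Lipschitz $\phi \colon X \to \R^{n}$ and every $S \in \mathcal{S}(\phi)$. Fix such a $\phi$ with Lipschitz constant $L := \Lip(\phi)$, and such an $S$. For every $k \in \N$, using the defining property of $\mathcal{S}(\phi)$ with $\theta = 1/k$, I obtain a Borel decomposition $S = S_{1}^{k} \cup \cdots \cup S_{m_{k}}^{k}$ and directions $w_{1}^{k}, \ldots, w_{m_{k}}^{k} \in \S$ such that each $S_{i}^{k}$ is $C(w_{i}^{k},1/k)$-null with respect to $\phi$.

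The key construction is a $*$-upper gradient of $f_{i}^{k} := w_{i}^{k} \cdot \phi$ that is small on $S_{i}^{k}$: namely,
\[\rho_{i}^{k} := \tfrac{L}{k}\chi_{S_{i}^{k}} + L\chi_{X \setminus S_{i}^{k}}.\]
To verify that this is a $*$-upper gradient of $f_{i}^{k}$, fix a 1-Lipschitz $\gamma \in \Gamma(X)$. Since $\phi$ is $L$-Lipschitz, $\phi \circ \gamma$ is $L$-Lipschitz on $\dom \gamma$, so at $\L^{1}$-a.e.\ $t \in \dom \gamma$ the derivative $(\phi \circ \gamma)'(t)$ exists with $\|(\phi \circ \gamma)'(t)\| \leq L$. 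If $\gamma(t) \notin S_{i}^{k}$, then $|(f_{i}^{k} \circ \gamma)'(t)| \leq L = \rho_{i}^{k}(\gamma(t))$ trivially. If $\gamma(t) \in S_{i}^{k}$, then \Cref{lem:cone-null} applied to $S_{i}^{k}$ forces $(\phi \circ \gamma)'(t) \notin \inti C(w_{i}^{k}, 1/k)$ for $\L^{1}$-a.e.\ such $t$, whence $|(f_{i}^{k} \circ \gamma)'(t)| = |w_{i}^{k} \cdot (\phi \circ \gamma)'(t)| \leq (1/k)\|(\phi \circ \gamma)'(t)\| \leq L/k = \rho_{i}^{k}(\gamma(t))$. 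Hypothesis \eqref{Lipug-star} then yields $\Lip(f_{i}^{k},x) \leq CL/k$ for $\mu$-a.e.\ $x \in S_{i}^{k}$.

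Since there are only countably many pairs $(k,i)$, I may discard a $\mu$-null set to obtain a full-measure subset $S' \subseteq S$ such that, for every $x \in S'$ and every $k \in \N$, letting $i(x,k)$ denote the unique index with $x \in S_{i(x,k)}^{k}$ and $D_{k}(x) := w_{i(x,k)}^{k} \in \S$, the bound $\Lip(D_{k}(x) \cdot \phi, x) \leq CL/k$ holds. Fix any $x \in S'$ and, by compactness of $\S$, extract a subsequence with $D_{k_{j}}(x) \to D_{\infty}(x) \in \S$. Since
\[|\Lip(D_{1} \cdot \phi, x) - \Lip(D_{2} \cdot \phi, x)| \leq \Lip((D_{1} - D_{2})\cdot \phi, x) \leq L\|D_{1} - D_{2}\|,\]
the map $D \mapsto \Lip(D \cdot \phi, x)$ is $L$-Lipschitz on $\S$, so passing to the limit yields $\Lip(D_{\infty}(x) \cdot \phi, x) = 0$ with $D_{\infty}(x) \in \S$. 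Hence $x \notin \ind \phi$, and so $\mu(S \cap \ind \phi) \leq \mu(S \setminus S') = 0$, as required.

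The principal subtlety of this plan is the construction of the $*$-upper gradient $\rho_{i}^{k}$, which via \Cref{lem:cone-null} converts the geometric $C(w,\theta)$-nullness property into a pointwise derivative bound along 1-Lipschitz curve fragments; this is exactly where the switch from ordinary upper gradients to $*$-upper gradients becomes convenient, since $\rho_i^k$ only has to control derivatives of compositions with bilipschitz pieces. The remainder is a soft compactness argument on $\S$ using that the constant $C$ in \eqref{Lipug-star} is \emph{uniform} over all Lipschitz functions, which is what permits letting $k \to \infty$ and contradicting membership in $\ind \phi$.
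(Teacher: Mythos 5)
Your proof is correct and follows essentially the same route as the paper's, which factors through \Cref{lem:c-null-in-PI}: you construct the identical $*$-upper gradient $\rho_i^k = \Lip\phi\cdot\bigl(\tfrac{1}{k}\chi_{S_i^k}+\chi_{X\setminus S_i^k}\bigr)$ of $w_i^k\cdot\phi$, verify it via \Cref{lem:cone-null}, and feed it into \eqref{Lipug-star}. The only difference is cosmetic, namely how the final contradiction with $x\in\ind\phi$ is extracted: the paper first stratifies $\ind\phi$ by $U_i=\{x:\min_{D\in\S}\Lip(D\cdot\phi,x)\geq 1/i\}$ and then picks a single $\theta=\theta(i)$ small enough that $1/i\leq C\Lip\phi\cdot\theta$ fails, whereas you run all $\theta=1/k$ simultaneously and, at each point of a full-measure subset, use compactness of $\S$ together with the $L$-Lipschitz continuity of $D\mapsto\Lip(D\cdot\phi,x)$ to produce a single direction $D_\infty(x)\in\S$ with $\Lip(D_\infty(x)\cdot\phi,x)=0$.
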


\Cref{main-star} follows by combining \Cref{thm:main} with the following.
\begin{proposition}\label{lem:c-null-in-PI}
  Let $(X,d,\mu)$ be a metric measure space and suppose that there exists a $C\geq 1$ such that, for any Lipschitz $f\colon X \to \R$ and any *-upper gradient $\rho$ of $f$, \eqref{Lipug-star} holds.
  Then for every $n\in\N$, every Lipschitz $\phi\colon X \to \R^{n}$ and every $S\in\mathcal S(\phi)$, $\mu(S\cap\ind(\phi))=0$.
\end{proposition}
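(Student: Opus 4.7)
The plan is to argue by contrapositive. Assuming $\mu(S \cap \ind \phi) > 0$, I will produce a Lipschitz $f \colon X \to \R$ and a *-upper gradient $\rho$ of $f$ that together violate \eqref{Lipug-star} on a set of positive measure. We may assume $\Lip(\phi) > 0$, since otherwise $\ind \phi = \emptyset$ already.

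The key preliminary step is to extract a uniform lower bound for $\Lip(D \cdot \phi, x)$ as $D$ varies over $\S$. A direct estimate using the triangle inequality gives $|\Lip(D_1 \cdot \phi, x) - \Lip(D_2 \cdot \phi, x)| \leq \|D_1 - D_2\| \Lip(\phi)$ for all $D_1,D_2 \in \R^n$ and $x \in X$, so $D \mapsto \Lip(D \cdot \phi, x)$ is continuous on the compact sphere $\S$. Therefore $m(x) := \min_{D \in \S} \Lip(D \cdot \phi, x)$ is attained and strictly positive at every $x \in \ind \phi$, and we obtain a Borel decomposition $\ind \phi = \bigcup_{k \in \N} I_k$ where $I_k := \{x : m(x) \geq 1/k\}$.

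Now if $\mu(S \cap \ind \phi) > 0$, then $\mu(S \cap I_k) > 0$ for some fixed $k \in \N$. Choose $\theta \in (0,1)$ small enough that $C \theta \Lip(\phi) < 1/k$, and use $S \in \mathcal{S}(\phi)$ to obtain a Borel decomposition $S = S_1 \cup \ldots \cup S_m$ with each $S_j$ being $C(w_j,\theta)$-null for some $w_j \in \S$. Pigeonholing, $\mu(S_j \cap I_k) > 0$ for some $j$. I then set $f := w_j \cdot \phi$, which is $\Lip(\phi)$-Lipschitz, and define $\rho := \theta \Lip(\phi) \chi_{S_j} + \Lip(\phi) \chi_{X \setminus S_j}$. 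To see that $\rho$ is a *-upper gradient of $f$, fix a 1-Lipschitz $\gamma \in \Gamma(X)$: off $\gamma^{-1}(S_j)$, the trivial bound $|(f \circ \gamma)'(t)| \leq \Lip(f) \leq \Lip(\phi) = \rho(\gamma(t))$ works, while on $\gamma^{-1}(S_j)$, Lemma \ref{lem:cone-null} forces $(\phi \circ \gamma)'(t) \notin \inti C(w_j, \theta)$ for $\L^1$-a.e.\ $t$, whence $|(f \circ \gamma)'(t)| = |w_j \cdot (\phi \circ \gamma)'(t)| \leq \theta \|(\phi \circ \gamma)'(t)\| \leq \theta \Lip(\phi) = \rho(\gamma(t))$.

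Applying the hypothesis \eqref{Lipug-star} to $(f,\rho)$ then yields $\Lip(w_j \cdot \phi, x) \leq C \theta \Lip(\phi) < 1/k$ at $\mu$-a.e.\ $x \in S_j$. On the other hand, $\Lip(w_j \cdot \phi, x) \geq m(x) \geq 1/k$ at every $x \in I_k$, and since $\mu(S_j \cap I_k) > 0$ a contradiction is reached. The principal obstacle is the first reduction: one must recognize the need for the uniform decomposition $\ind \phi = \bigcup_k I_k$, which rests crucially on the compactness of $\S$ together with the continuous dependence of $\Lip(D \cdot \phi, x)$ on $D$; without it, there would be no way to preselect a single $\theta$ small enough to be defeated by the *-upper gradient bound.
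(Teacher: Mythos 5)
Your proposal is correct and takes essentially the same approach as the paper: the same uniform decomposition of $\ind\phi$ into pieces $I_k=\{x:\min_{D\in\S}\Lip(D\cdot\phi,x)\geq 1/k\}$ (called $U_i$ there), the same choice of $f=w_j\cdot\phi$ and $\rho=\Lip\phi\,(\theta\chi_{S_j}+\chi_{X\setminus S_j})$, the same verification via \Cref{lem:cone-null} that $\rho$ is a $*$-upper gradient, and the same final inequality clash. The only difference is cosmetic — you argue by contradiction with a pigeonhole over the $S_j$, whereas the paper first reduces to showing each $C(w,\theta)$-null subset of $U_i$ is null — so the two arguments are in substance identical.
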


\begin{proof}
	For each $x\in \ind\phi$, the function
	\[ D\in \S \mapsto \Lip(D\cdot\phi,x)\]
	is Lipschitz (this follows from $\Lip(f + g,x) \leq \Lip(f,x) + \Lip(g,x)$) and hence bounded away from $0$. Let $\lambda_x>0$ be its minimum value. Now, it suffices to show that any $\mathcal{S}(\phi)$ subset of $U_i =\{x \in \ind \phi :\lambda_x \geq \tfrac{1}{i}\}$, $i\in\N$, for $i \in \N$ fixed, is $\mu$-null.
		
	Fix a $\mathcal{S}(\phi)$ subset $S \subset U_{i}$. Then, fix $\theta = \theta(i) > 0$ (to be determined momentarily), and decompose $S = S_{1} \cup \ldots \cup S_{m}$, where each $S_{j}$ is $C(w_{j},\theta)$-null for some $w_{j} \in \S$. It suffices to show that $\mu(S_{j}) = 0$ for each $1 \leq j \leq m$, so we may assume to begin with that $S$ is $C(w,\theta)$-null for some $w \in \S$. Now, define $f := w\cdot \phi \colon X \to \R$.
	We claim that
	\[\rho = \Lip\phi(\theta \chi_{S}+ \chi_{S^c})\]
	is a *-upper gradient of $f$.
	Indeed, if $\gamma\in\Gamma(X)$ is 1-Lipschitz, then by \Cref{lem:cone-null}, $(\phi\circ\gamma)'(t) \not\in \inti(C(w,\theta))$ for almost every $t\in \gamma^{-1}(S)$.
	That is, for almost every $t\in \gamma^{-1}(S)$,
	\begin{equation}\label{rho-is-*ug}
	|(f\circ\gamma)'(t)|=|w\cdot(\phi\circ\gamma)'(t)|\leq \theta\|(\phi\circ\gamma)'(t)\| \leq \theta \Lip \phi= \rho(\gamma(t)).
	\end{equation}
        Of course, for almost every $t\in\gamma^{-1}(S^{c})$, $|(f\circ\gamma)'(t)| \leq \Lip\phi$.
	Therefore, by \eqref{Lipug-star}, for $\mu$ almost every $x\in S$,
	\begin{equation*}
	1/i \leq \Lip(f,x) \leq C \Lip\phi \cdot \theta.
	\end{equation*}
	This shows that $\mu(S) = 0$ for $\theta = \theta(i) > 0$ sufficiently small, and the proof is complete.
\end{proof}

\begin{remark} Note that the proof of \Cref{lem:c-null-in-PI} did not use the full strength of the hypothesis \cref{Lipug-star}. It would, instead, suffice to assume the following. For all $x \in X$ there exists a ``modulus of continuity" $\omega_{x} \colon (0,\infty) \to (0,\infty)$ with $\omega_{x}(r) \searrow 0$ as $r \searrow 0$ such that $\Lip(f,x) \leq \omega_{x}(\rho(x))$ for all Lipschitz $f \colon X \to \R$, for all *-upper gradients $\rho$ of $f$, and for $\mu$ almost every $x \in X$. \end{remark}

\end{document}